\tikzstyle{block}=[draw opacity=0.7,line width=1.4cm]
\DeclareRobustCommand{\SkipTocEntry}[5]{}
\def\ACl{\mathop{\mathrm{ACl}}\nolimits}
\def\dom{\mathop{\mathrm{Dom}}\nolimits}
\def\range{\mathop{\mathrm{Range}}\nolimits}
\def\Forb{\mathop{\mathrm{Forb_{he}}}\nolimits}
\def\Aut{\mathop{\mathrm{Aut}}\nolimits}
\def\cl{\mathop{\mathrm{Cl}}\nolimits}
\def\F{{\mathcal F}}
\def\Str{\mathop{\mathrm{Str}}\nolimits}
\def\OStr{\mathop{\vv{\mathrm{Str}}}\nolimits}
\def\K{{\mathcal K}}
\def\Age{\mathop{\mathrm{Age}}\nolimits}
\newcommand{\ie}{i.\,e.}
\newcommand{\eg}{e.\,g.}
\def\str#1{\mathbf {#1}}
\def\arity#1{a(\rel{}{#1})}
\def\arityf#1{a(\func{}{#1})}
\def\nbrel#1#2{R\ifstrempty{#1}{}{_{#1}}\ifstrempty{#2}{}{^{#2}}}
\def\rel#1#2{\nbrel{\ifstrempty{#1}{}{\str{#1}}}{#2}}
\def\nbfunc#1#2{F\ifstrempty{#1}{}{_{#1}}\ifstrempty{#2}{}{^{#2}}}
\def\func#1#2{\nbfunc{\ifstrempty{#1}{}{\str{#1}}}{#2}}
\def\permrel#1#2#3{#1(R\ifstrempty{#2}{}{}\ifstrempty{#3}{}{^{#3}})_{\mathbf{#2}}}
\def\permfunc#1#2#3{#1(F\ifstrempty{#2}{}{\ifstrempty{#3}{}{^{#3}})_{\mathbf{#2}}}}
\def\powerset#1{\mathscr{P}(#1)}
\theoremstyle{plain}
\newtheorem{theorem}{Theorem}[section]
\newtheorem{prop}[theorem]{Proposition}
\newtheorem{observation}[theorem]{Observation}
\theoremstyle{definition}
\newtheorem{definition}[theorem]{Definition}
\newtheorem*{example}{Example}
\newtheorem*{remark*}{Remark}
\newtheorem*{claim*}{Claim}
\theoremstyle{remark}
\newtheorem{remark}[theorem]{Remark}
\def\dom{\mathop{\mathrm{Dom}}\nolimits}
\def\str#1{\mathbf {#1}}
\def\Fraisse{Fra\"{\i}ss\' e}
\def\GammaL{\Gamma\!_L}
\begin{document}
\title[Structural Ramsey Theory and EPPA]{Structural Ramsey Theory\\ and\\ the Extension Property for Partial Automorphisms}

\author[J. Hubi\v cka]{Jan Hubi\v cka}
\address{Department of Applied Mathematics (KAM), Charles University, Ma\-lo\-stransk\'e n\'a\-m\v es\-t\'\i~25, Praha~1, Czech Republic}
\curraddr{}
\email{hubicka@kam.mff.cuni.cz}
\thanks{The introduction chapter from habilitation thesis, submitted September 30 2019.
 This paper is part of a project that has received funding from the European Research Council (ERC) under the European Union’s Horizon 2020 research and innovation programme (grant agreement No 810115). Jan Hubi\v cka is further supported by project 18-13685Y of the Czech Science Foundation (GA\v CR), by Center for Foundations of Modern Computer Science (Charles University project UNCE/SCI/004) and by the PRIMUS/17/SCI/3 project of Charles University.}
\maketitle
\begin{abstract}
We survey recent developments concerning two properties of classes of finite structures: the Ramsey property and the extension property for partial automorphisms (EPPA).
\end{abstract}

\section{Introduction}
We survey recent developments concerning two  properties
of classes of finite structures (introduced formally in Section~\ref{sec:preliminaries}).

\medskip

\paragraph{1.}
 Class $\K$ of finite structures is \emph{Ramsey} (or has the \emph{Ramsey property}) if for every $\str{A}, \str{B}\in \K$ there
exists $\str{C}\in \K$ such that for every colouring of the embeddings $\str{A}\to\str{C}$
with two colours there exists an embedding $f\colon\str{B}\to \str{C}$ such that all embeddings
$\str{A}\to f(\str{B})$ have the same colour.

The study of Ramsey classes emerged as a  generalisation of the
classical results of Ramsey theory.
Ramsey theorem itself implies that the class of all finite linearly ordered sets is Ramsey.
The first true structural Ramsey theorem was given in 1977 by Ne\v set\v ril and R\"odl~\cite{Nevsetvril1977} and, independently,
by Abramson and Harrington~\cite{Abramson1978}. By this result, the class of all finite graphs (or, more generally, relational structures in a given language) endowed with a linear ordering of vertices is Ramsey.
 The celebrated Ne\v set\v ril--R\"odl theorem~\cite{Nevsetvril1977} states that the classes of relational structures endowed with a linear order and omitting a given set of irreducible substructures is Ramsey. Additional examples are discussed in Section~\ref{sec:examples}.

\medskip

\paragraph{2.}
Class $\K$ of finite structures has the \emph{extension
property for partial automorphisms} (\emph{EPPA} for short, sometimes also called the
\emph{Hrushovski property}) if for every $\str{A}\in \K$ there exists
$\str{B}\in \K$ containing $\str{A}$ as an (induced) substructure 
such that every partial automorphism of $\str{A}$ (an
isomorphism between two induced substructures of $\str{A}$) extends to an automorphism of
$\str{B}$. 

This concept was originally motivated by the
\emph{small index conjecture} from group theory which states that a subgroup of the automorphism group of the countable random graph is of countable index if and only if it contains a stabiliser
of a finite set. This conjecture was proved by Hodges, Hodkinson, Lascar and Shelah in 1993~\cite{hodges1993b}.
To complete the argument
they used a result of Hrushovski~\cite{hrushovski1992} who, in 1992, established that the class of all finite graphs has EPPA.  After this, the quest of identifying new classes of structures with EPPA continued. Herwig generalised Hrushovski's result to all finite relational structures~\cite{Herwig1995} (in a fixed finite language) and later to the classes of all finite relational structures omitting certain sets of irreducible substructures~\cite{herwig1998}. He also developed several combinatorial strategies to establish EPPA for a given class. This development culminated to the Herwig--Lascar theorem~\cite{herwig2000}, one of the deepest results in the area, which established a structural condition for a class to have EPPA.  Again many more examples of classes having EPPA are known today and are discussed in Section~\ref{sec:examples}.

\bigskip

While EPPA and the Ramsey property may not seem related at first glance, they
are both (under mild and natural assumptions) strengthenings of a
model-theoretical notion of the amalgamation property (Definition~\ref{def:amalgamation}). It was noticed by Ne\v set\v ril in late
1980's~\cite{Nevsetvril1989a,Nevsetril2005} that every Ramsey class with the joint
embedding property has the amalgamation property. The same holds for EPPA as well
(in fact, EPPA was introduced as a stronger form of amalgamation). 

By the classical \Fraisse{} theorem~\cite{Fraisse1953} (Theorem~\ref{thm:unrestrictedEPPA}),
every amalgamation class $\K$ (that is, an isomorphism-closed hereditary class of finite structures with the
amalgamation property, the joint embedding, see Definition~\ref{defn:amalg}) with countably many mutually
non-isomorphic structures has an up to isomorphism unique countable \emph{\Fraisse{} limit} $\str H$.
The structure
$\str{H}$ is \emph{homogeneous}: every
partial automorphism  of $\str{H}$ with finite domain extends to an automorphism of $\str{H}$.
The correspondence between amalgamation classes and homogeneous structures is one-to-one, every amalgamation
class with only countably many mutually non-isomorphic structures is precisely the class of all finite structures which embed to $\str{H}$ (the \emph{age} of $\str{H}$).

In a surprising development with major implications for the
subject, in the period 2005--2008 both the Ramsey property and EPPA
were shown to be intimately linked to topological dynamics.
 In 2005, Kechris, Pestov and
Todor{\v c}evi{\' c}~\cite{Kechris2005} showed that the automorphism group of
a homogeneous structure $\str{H}$ is extremely amenable if and only if
its age is Ramsey. In 2008, Kechris and Rosendal~\cite{Kechris2007} showed
that EPPA implies amenability. 
This gave a fresh motivation to seek deeper understanding and additional examples of Ramsey and EPPA classes.
We discuss some recent developments in this area with a focus on progress in
systematising the combinatorial techniques on which the applications of the new
theory ultimately are based.
For further connections see, for example, Nguyen Van Th{\'e}'s~\cite{NVT14}, Bodirsky's~\cite{Bodirsky2015} and Solecky's~\cite{Solecki2014} surveys (for the Ramsey context); Macpherson's survey~\cite{Macpherson2011} (for homogeneous structures) and Siniora's thesis~\cite{Siniora2} (for the EPPA context).

\section{Preliminaries}
\label{sec:preliminaries}
\subsection{$\GammaL$-structures}
\label{sec:structures}
We find it convenient to work with model-theoretical structures (see, for example, \cite{Hodges1993}) generalised in two ways.

In the first place, we equip the language with a permutation
group. In the classical approach this group is trivial.

In the second place, we allow the language to include functions with values in the power set of the structure. This allows for an abstract treatment of the notion of algebraic closure (Definition~\ref{def:algebraic}).

 This extended formalism was introduced by Hubička, Konečný and Nešetřil to greatly simplify the presentation of EPPA constructions and also to make it easy to speak about free amalgamation classes in languages containing non-unary functions~\cite{Hubicka2018EPPA} (see also~\cite{Hubicka2016} and~\cite{Evans3} which use a related formalism in the Ramsey context).

\medskip

Let $L=L_{\mathcal R}\cup L_{\mathcal F}$ be a \emph{language} with \emph{relational symbols} $\rel{}{}\in L_{\mathcal R}$ and \emph{function symbols} $F\in L_{\mathcal F}$ each having its \emph{arity} denoted by $\arity{}$ for relations and $\arityf{}$ for functions.

Let $\GammaL$ be a permutation group on $L$ which preserves types and arities of all symbols. We say that $\GammaL$ is a \emph{language equipped with a permutation group}. 

Denote by $\powerset{A}$ the set of all subsets of $A$. A \emph{$\GammaL$-structure} $\str{A}$ is a structure with \emph{vertex set} $A$, functions $\func{A}{}\colon A^{\arityf{}}\to \powerset{A}$ for every $\func{}{}\in L_{\mathcal F}$ and relations $\rel{A}{}\subseteq A^{\arity{}}$ for every $\rel{}{}\in L_{\mathcal R}$.
(By $\powerset{A}$ we denote the power set of $\str{A}$.)
Notice that the domain of a function is ordered while the range is unordered.

\medskip

If the permutation group $\GammaL$ is trivial, we also speak of $L$-structures instead of $\GammaL$-structures.
The permutation of language is motivated by applications in the context of EPPA.
Presently there are no applications of this concept in the Ramsey context.
For this reason we formulate many definitions and results for $L$-structures, only so that we can refer to published proofs in the
area without the need to justify their validity for $\GammaL$-structures.

Note that we will still consider set-valued functions for $L$-structures.
If, in addition, all value ranges of all functions consist of singletons, one obtains the usual notion of model-theoretical language and structures. 
All results and constructions in this thesis presented on $\GammaL$-structures and $L$-structures can thus be applied in this context.

 If the vertex set $A$ is finite, we call $\str A$ a \emph{finite structure} and by $\Str(\GammaL)$ we denote the class of all finite $\GammaL$-structures.
We consider only structures with finitely or countably infinitely many vertices. 
If $L_{\mathcal F} = \emptyset$, we call $\GammaL$ a \emph{relational language} and say that a $\GammaL$-structure is  a \emph{relational $\GammaL$-structure}.
A function $\func{}{}$ such that $\arityf{}=1$ is a \emph{unary function}.

\subsection{Maps between $\GammaL$-structures}
The standard notions of homomorphism, monomorphism and embedding extend to 
$\GammaL$-structures naturally as follows.

\medskip

A \emph{homomorphism} $f\colon \str{A}\to \str{B}$ is a pair $f=(f_L,f_A)$ where $f_L\in \GammaL$ and $f_A$ is a mapping $A\to B$
 such that: 
\begin{enumerate}
\item[(a)] for every relation symbol  $\rel{}{}\in L_{\mathcal R}$ of arity $a$ it holds:
$$(x_1,x_2,\ldots, x_{a})\in \rel{A}{}\hskip-0.2cm\implies\hskip-0.2cm (f_A(x_1),f_A(x_2),\ldots,f_A(x_{a}))\allowbreak\in \permrel{f_L}{B}{},$$
\item[(b)] for every function symbol $\func{}{}\in L_{\mathcal F}$ of arity $a$ it holds:
$$f_A(\func{A}{}(x_1,x_2,\allowbreak \ldots, x_{a}))\subseteq\permfunc{f_L}{B}{}(f_A(x_1),f_A(x_2),\ldots,\allowbreak f_A(x_{a})).$$
\end{enumerate}
For brevity we will also write $f(x)$ for $f_A(x)$ in the context where $x\in A$ and $f(S)$ for $f_L(S)$ where $S\in L$.
For a subset $A'\subseteq A$ we denote by $f(A')$ the set $\{f(x): x\in A'\}$ and by $f(\str{A})$ the homomorphic image of a $\GammaL$-structure $\str{A}$.

If $f_A$ is injective then $f$ is called a \emph{monomorphism}. A monomorphism $f$ is an \emph{embedding} if:
\begin{enumerate}
\item[(a)] for every relation symbol  $\rel{}{}\in L_{\mathcal R}$ of arity $a$ it holds:
$$(x_1,x_2,\ldots, x_{a})\in \rel{A}{}\hskip-0.2cm\iff\hskip-0.2cm (f_A(x_1),f_A(x_2),\ldots,f_A(x_{a}))\allowbreak\in \permrel{f_L}{B}{},$$
\item[(b)] for every function symbol $\func{}{}\in L_{\mathcal F}$ of arity $a$ it holds:
$$f_A(\func{A}{}(x_1,x_2,\allowbreak \ldots, x_{a}))=\permfunc{f_L}{B}{}(f_A(x_1),f_A(x_2),\ldots,\allowbreak f_A(x_{a})).$$
\end{enumerate}
If $f$ is an embedding where $f_A$ is one-to-one then $f$ is an \emph{isomorphism}. If $f_A$ is an inclusion and $f_L$ is the identity then $\str{A}$ is a \emph{substructure} of $\str{B}$ and $f(\str{A})$ is also called a \emph{copy} of $\str{A}$ in $\str{B}$. For an embedding $f\colon\str{A}\to \str{B}$ we say that $\str{A}$ is \emph{isomorphic} to $f(\str{A})$.

Given $\str{A}\in \K$ and $B\subseteq A$, the \emph{closure of $B$ in $\str{A}$}, denoted by $\cl_{\str{A}}(B)$, is the smallest substructure of $\str{A}$ containing $B$.
For $x\in A$ we will also write $\cl_{\str{A}}(x)$ for $\cl_{\str{A}}(\{x\})$.

\subsection{Amalgamation classes, homogeneous structures and the \Fraisse{} theorem}\label{sec:amalg}
\begin{figure}
\centering
\includegraphics{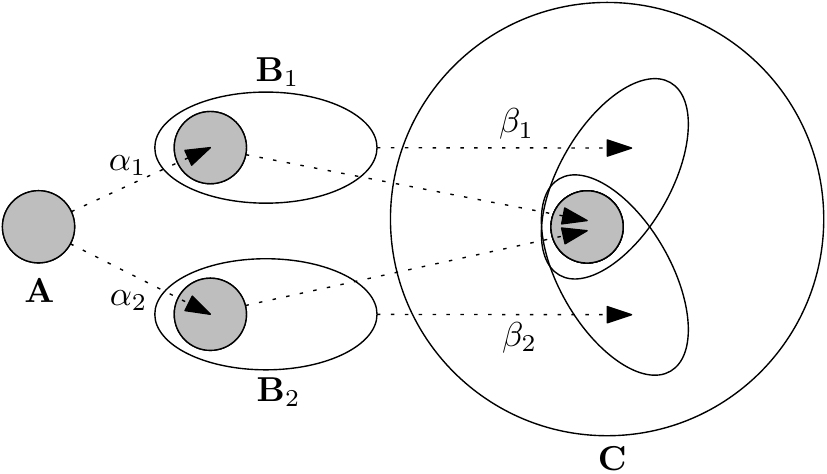}
\caption{An amalgamation of $\str{B}_1$ and $\str{B}_2$ over $\str{A}$.}
\label{amalgamfig}
\end{figure}

Basically all classes of structures of interest in this work are the so-called amalgamation classes
which we introduce now.

\begin{definition}[Amalgamation~\cite{Fraisse1953}]
\label{def:amalgamation}
Let $\str{A}$, $\str{B}_1$ and $\str{B}_2$ be $\GammaL$-structures, $\alpha_1$ an embedding of $\str{A}$ 
into $\str{B}_1$ and $\alpha_2$ an embedding of $\str{A}$ into $\str{B}_2$. Then
every $\GammaL$-structure $\str{C}$
 with embeddings $\beta_1\colon\str{B}_1 \to \str{C}$ and
$\beta_2\colon\str{B}_2\to\str{C}$ such that $\beta_1\circ\alpha_1 =
\beta_2\circ\alpha_2$ (note that this must also hold for the language part of $\alpha_i$'s and $\beta_i$'s) is called an \emph{amalgamation} of $\str{B}_1$ and $\str{B}_2$ over $\str{A}$ with respect to $\alpha_1$ and $\alpha_2$. 
\end{definition}
See Figure~\ref{amalgamfig}.
We will often call $\str{C}$ simply an \emph{amalgamation} of $\str{B}_1$ and $\str{B}_2$ over $\str{A}$
(in most cases $\alpha_1$ and $\alpha_2$ can be chosen to be inclusion embeddings). If the structure $\str{A}$ is empty, we call $\str{C}$ the \emph{joint embedding} of $\str{B}_1$ and $\str{B}_2$.

We say that the amalgamation is \emph{strong} if it holds that $\beta_1(x_1)=\beta_2(x_2)$ if and only if $x_1\in \alpha_1(A)$ and $x_2\in \alpha_2(A)$.
Strong amalgamation is \emph{free} if $C=\beta_1(B_1)\cup \beta_2(B_2)$ and whenever a tuple $\bar{x}$ of vertices of $\str{C}$ contains vertices of both
$\beta_1(B_1\setminus \alpha_1(A))$ and $\beta_2(B_2\setminus \alpha_2(A))$, then $\bar{x}$ is in no relation of $\str C$,
and for every function $\func{}{}\in L$ with $\arityf{} = |\bar{x}|$ it holds that $\func{C}{}(\bar{x})=\emptyset$.

\begin{definition}[Amalgamation class~\cite{Fraisse1953}]
\label{defn:amalg}
An \emph{amalgamation class} is a class $\K$ of finite $\GammaL$-structures which is closed for isomorphisms and satisfies the following three conditions:
\begin{enumerate}
\item \emph{Hereditary property:} For every $\str{A}\in \K$ and every structure $\str{B}$ with an embedding $f\colon \str B \to \str{A}$ we have $\str{B}\in \K$;
\item \emph{Joint embedding property:} For every $\str{A}, \str{B}\in \K$ there exists $\str{C}\in \K$ with embeddings $f\colon \str A\to \str{C}$ and $g\colon \str B\to \str C$;
\item \emph{Amalgamation property:} 
For $\str{A},\str{B}_1,\str{B}_2\in \K$ and embeddings $\alpha_1\colon\str{A}\to\str{B}_1$ and $\alpha_2\colon\str{A}\to\str{B}_2$, there is $\str{C}\in \K$ which is an amalgamation of $\str{B}_1$ and $\str{B}_2$ over $\str{A}$ with respect to $\alpha_1$ and $\alpha_2$.
\end{enumerate}
If the $\str{C}$ in the amalgamation property can always be chosen as the free amalgamation, then $\K$ is a \emph{free amalgamation class}. 
Analogously if $\str{C}$ can be always chosen as a strong amalgamation, then $\K$ is \emph{strong amalgamation class}
\end{definition}
A class $\K$ of finite $\GammaL$ has \emph{joint embedding property} if for every $\str{B}_1,\str{B}_2\in \K$ there exists a joint embedding $\str{C}\in \K$. 

\medskip

Recall that a $\GammaL$-structure is \emph{(ultra)homogeneous} if it is countable (possibly infinite) and every isomorphism between finite substructures extends to an automorphism.
A $\GammaL$-structure $\str{A}$ is \emph{locally finite} if the
$\str{A}$-closure of every finite subset of $A$ is finite. We focus on locally finite structures only. In
this context, (locally finite) homogeneous structures are characterised by the properties
of their finite substructures.  We denote by
$\Age(\str{A})$ the class of all finite structures which embed to $\str{A}$.
For a class $\K$ of relational structures, we denote by $\Age(\K)$ the class
$\bigcup_{\str{A}\in \K} \Age(\str{A})$.

The following is one of the cornerstones of model theory.

\begin{theorem}[\Fraisse{}~\cite{Fraisse1986}, see \eg{}\ \cite{Hodges1993}]
\label{fraissethm}
Let $\GammaL$ be language and let $\K$ be a class of finite $\GammaL$-structures with only countably many non-isomorphic structures.

\begin{enumerate}[label=$(\alph*)$]
\item\label{fraisse:a} A class $\K$ is the age of a countable
locally finite homogeneous structure $\str{H}$ if and only if $\K$ is an amalgamation
class.
\item If the conditions of \ref{fraisse:a} are satisfied then the structure $\str{H}$ is
unique up to isomorphism. 
\end{enumerate}
\end{theorem}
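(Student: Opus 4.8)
\emph{The plan.} This is the classical \Fraisse{} back-and-forth argument, organised around the notion of a \emph{$\K$-generic} structure: a countable locally finite $\GammaL$-structure $\str{H}$ with $\Age(\str{H})\subseteq\K$, into which every member of $\K$ embeds, and having the \emph{extension property} --- for all $\str{A},\str{B}\in\K$, every embedding $i\colon\str{A}\to\str{B}$ and every embedding $e\colon\str{A}\to\str{H}$, there is an embedding $f\colon\str{B}\to\str{H}$ with $f\circ i=e$ and with the same language component as $e$. I would then prove in turn: (1) every amalgamation class admits a $\K$-generic structure; (2) any two $\K$-generic structures are isomorphic, and every $\K$-generic structure is homogeneous with age exactly $\K$; (3) conversely, the age of a countable locally finite homogeneous structure is an amalgamation class, and is $\K$-generic, so that (2) yields the uniqueness in part~(b). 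The only systematic deviation from the relational prototype is that, since $L$ contains set-valued functions, ``substructure'' means ``$\cl$-closed subset'', so each move that would relationally add one point here adds the closure of that point; local finiteness is precisely what keeps these closures finite and inside $\K$.

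\emph{Step (1): a $\K$-generic structure exists.} I would build $\str{H}$ as the union of a chain $\str{C}_0\subseteq\str{C}_1\subseteq\cdots$ of members of $\K$. Starting from any $\str{C}_0\in\K$, a bookkeeping device at stage $n$ either forms a joint embedding of $\str{C}_n$ with the next structure in a fixed enumeration of $\K$ (so that every member of $\K$ eventually embeds into $\str{H}$), or processes the next item from a dovetailed list of all tasks $(\str{A}\subseteq\str{C}_m,\ i\colon\str{A}\to\str{B})$ with $\str{B}\in\K$, amalgamating $\str{C}_n$ and $\str{B}$ over $\str{A}$ along the inclusion and $i$; in either case the resulting member of $\K$ is replaced by an isomorphic copy (legitimate as $\K$ is isomorphism-closed) in which $\str{C}_n$ is a genuine substructure of $\str{C}_{n+1}$. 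The limit $\str{H}=\bigcup_n\str{C}_n$ is countable; it is locally finite since a finite $X\subseteq H$ lies in some $\str{C}_n$, whence $\cl_{\str{H}}(X)\subseteq C_n$; its age is exactly $\K$ since a finite substructure of $\str{H}$ lies in some $\str{C}_n\in\K$ and $\K$ is hereditary, while every member of $\K$ embeds into $\str{H}$; and it has the extension property because an embedding $\str{A}\to\str{H}$ has finite image, hence corestricts into some $\str{C}_m$, and the matching task was handled at a later stage.

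\emph{Step (2): generic is homogeneous and unique.} The core is: if $\str{H},\str{H}'$ are $\K$-generic then any isomorphism $g$ between finite substructures extends to an isomorphism $\str{H}\to\str{H}'$. Enumerate $H$ and $H'$ and build an increasing chain $g=g_0\subseteq g_1\subseteq\cdots$ of isomorphisms between finite substructures, alternately forcing the next point of $H$ into the domain and the next point of $H'$ into the range. For a domain step with $g_k\colon\str{E}_k\to\str{F}_k$ and target $h$, put $\str{E}'=\cl_{\str{H}}(E_k\cup\{h\})$ --- finite by local finiteness and a member of $\K$ --- and apply the extension property of $\str{H}'$ to $\str{E}_k\hookrightarrow\str{E}'$ and to $g_k\colon\str{E}_k\to\str{H}'$; the range step is the mirror image, using $g_k^{-1}$ and the extension property of $\str{H}$. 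The union $\bigcup_k g_k$ is a bijection $H\to H'$ and a $\GammaL$-isomorphism: any relation argument tuple lies in some $\str{E}_k$, and any function argument tuple together with its (finite) set of values lies in some substructure $\str{E}_k$, on which $g_k$ is an embedding --- local finiteness again. Taking $\str{H}'=\str{H}$ and starting from the identity on one substructure gives homogeneity; taking distinct $\str{H},\str{H}'$ and starting from the empty map gives part~(b).

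\emph{Step (3): the forward direction, and the main obstacle.} Let $\str{H}$ be countable, locally finite and homogeneous and set $\K=\Age(\str{H})$; it is isomorphism-closed and hereditary by definition and has countably many isomorphism types since $\str{H}$ is countable. For joint embedding, embed $\str{A},\str{B}\in\K$ into $\str{H}$ and take $\cl_{\str{H}}$ of the union of the images --- finite, in $\K$, and receiving both. For amalgamation of $\str{B}_1,\str{B}_2$ over $\str{A}$ along $\alpha_1,\alpha_2$, embed $\str{B}_1$ by $f_1$ and $\str{B}_2$ by $g$ into $\str{H}$; then $f_1\circ\alpha_1$ and $g\circ\alpha_2$ embed $\str{A}$ with isomorphic images, so homogeneity gives an automorphism $\phi$ of $\str{H}$ with $\phi\circ f_1\circ\alpha_1=g\circ\alpha_2$, and $\cl_{\str{H}}\bigl(f_1(B_1)\cup(\phi^{-1}\!\circ g)(B_2)\bigr)\in\K$ is the required amalgam; the same straightening shows $\str{H}$ is $\K$-generic, so Step~(2) gives uniqueness. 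I expect the main obstacle to be disciplinary rather than conceptual: one must consistently replace subsets by their (finite, by local finiteness) closures so as never to leave $\K$ --- the one place the local finiteness hypothesis is genuinely and repeatedly used --- and one must track the language component $f_L\in\GammaL$ of every map so that the pieces assembled in the back-and-forth restrict compatibly, which is harmless because $\GammaL$ is a group and the extension property is phrased to preserve $f_L$, but which must be stated.
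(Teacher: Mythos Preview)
The paper does not prove this theorem: it is stated as a classical result with references to \Fraisse{} and to Hodges, and the text moves on immediately to the next definition. There is therefore no ``paper's own proof'' to compare against. Your proposal is the standard \Fraisse{} construction and back-and-forth argument, and your adaptations to the $\GammaL$-framework (systematically replacing finite sets by their closures, invoking local finiteness to keep these finite and in $\K$, and keeping the language component $f_L$ fixed along the chain of partial isomorphisms) are exactly the right bookkeeping. This is essentially the proof one would find in Hodges, transported to the present setting.
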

\medskip

Generalising notions of a graph clique and a Gaifman graph clique, we use the following:
\begin{definition}[Irreducible structure~\cite{Evans3}]
\label{def:irreducible}
A $\GammaL$-structure is \emph{irreducible} if it is not the free amalgamation of its proper
substructures.
\end{definition}\label{def:homomorphism-embedding}
The following captures the right notion of mapping which will be used to present
general constructions in Section~\ref{sec:general}.
\begin{definition}[Homomorphism-embedding~\cite{Hubicka2016}]

Let $\str{A}$ and $\str{B}$ be $\GammaL$-structures.
 A homomorphism $f\colon\str{A}\to \str{B}$ is
a \emph{homomorphism-embedding} if the restriction $f|_{\str C}$ is an embedding whenever $\str C$ is an irreducible
substructure of $\str{A}$.
\end{definition}
 Given a family $\mathcal F$ of $\GammaL$-structures, we denote by
$\Forb(\mathcal F)$ the class of all finite or finite $\GammaL$-structures $\str{A}$.  
such that there is no $\str{F}\in \mathcal F$ with a homomorphism-embedding $\str{F}\to\str{A}$.

\section{Extension property for partial automorphisms (EPPA)}
A \emph{partial automorphism} of a $\GammaL$-structure $\str{A}$ is an isomorphism $f\colon
\str{C} \to \str{C}'$ where $\str{C}$ and $\str{C}'$ are substructures of
$\str{A}$ (note that this also includes a full permutation $f_L\in \GammaL$ of the language).

\begin{definition}[EPPA~\cite{hrushovski1992}]
\label{defn:EPPA}
Let $\GammaL$ be a language equipped with a permutation group.
  We say that a class $\K$ of finite $\GammaL$-structures  has the {\em
extension property for partial automorphisms}  (\emph{EPPA}, sometimes called
the \emph{Hrushovski property}) if for every $\str{A} \in \K$
there is $\str{B} \in \K$ such that $\str{A}$ is a substructure of $\str{B}$
and every partial automorphism of $\str{A}$ extends to an automorphism of
$\str{B}$.
\end{definition}
  We call $\str{B}$ satisfying the condition of Definition~\ref{defn:EPPA} an \emph{EPPA-witness of
$\str{A}$}. $\str{B}$ is \emph{irreducible structure faithful} (with respect to $\str A$) if it has the property
that for every irreducible substructure $\str{C}$ of $\str{B}$ there exists an 
automorphism $g$ of $\str{B}$ such that $g(C)\subseteq A$.
The notion of faithful EPPA-witnesses was introduced by Hodkinson and Otto~\cite{hodkinson2003}.

\begin{observation}\label{obs:eppaamalgamation}
Every hereditary isomorphism-closed class of finite $\GammaL$-structures which has EPPA and the joint embedding property (see Definition~\ref{defn:amalg}) is an amalgamation class. 
\end{observation}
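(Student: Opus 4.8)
The plan is to derive the amalgamation property directly from EPPA applied to a suitable joint embedding. Let me sketch the standard argument.

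Suppose $\K$ is hereditary, isomorphism-closed, has the joint embedding property, and has EPPA. Given $\str{A}, \str{B}_1, \str{B}_2 \in \K$ with embeddings $\alpha_i \colon \str{A} \to \str{B}_i$, I want to build an amalgamation inside $\K$.

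\begin{proof}[Proof sketch]
First, using the joint embedding property, find $\str{D} \in \K$ together with embeddings $\gamma_1 \colon \str{B}_1 \to \str{D}$ and $\gamma_2 \colon \str{B}_2 \to \str{D}$. After replacing $\str{D}$ by an isomorphic copy we may assume that the two copies $\gamma_1 \circ \alpha_1(\str A)$ and $\gamma_2 \circ \alpha_2(\str A)$ of $\str{A}$ inside $\str{D}$ are, individually, substructures of $\str D$; in general they are \emph{different} substructures. The key move is to apply EPPA: let $\str{E} \in \K$ be an EPPA-witness for $\str{D}$, so $\str{D}$ is a substructure of $\str{E}$ and every partial automorphism of $\str{D}$ extends to an automorphism of $\str{E}$. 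Now consider the map sending $\gamma_1\circ\alpha_1(x)$ to $\gamma_2\circ\alpha_2(x)$ for $x \in A$; since $\gamma_i\circ\alpha_i$ are embeddings of $\str A$, this is an isomorphism between two substructures of $\str D$, hence a partial automorphism of $\str D$, and therefore extends to an automorphism $\theta$ of $\str{E}$ (which also acts as some $\theta_L \in \GammaL$ on the language).

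Having $\theta$, I set $\str{C} := \str{E} \in \K$ and define $\beta_1 := \gamma_1 \colon \str{B}_1 \to \str{E}$ (composed with the inclusion $\str D \hookrightarrow \str E$) and $\beta_2 := \theta^{-1} \circ \gamma_2 \colon \str{B}_2 \to \str{E}$. Both are embeddings (a composition of embeddings with an automorphism). It remains to check $\beta_1 \circ \alpha_1 = \beta_2 \circ \alpha_2$, including on the language part. For $x \in A$ we have $\beta_2(\alpha_2(x)) = \theta^{-1}(\gamma_2(\alpha_2(x))) = \gamma_1(\alpha_1(x)) = \beta_1(\alpha_1(x))$, using that $\theta$ extends the chosen partial automorphism. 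On the language side, since $\gamma_1, \gamma_2$ can be taken with identity language parts and $\theta$ was produced as an extension of a partial automorphism whose language component can be taken trivial, the language parts of $\beta_1\circ\alpha_1$ and $\beta_2\circ\alpha_2$ agree as well. Thus $\str{C}$ is an amalgamation of $\str{B}_1$ and $\str{B}_2$ over $\str{A}$ with respect to $\alpha_1,\alpha_2$, and $\str C \in \K$, proving the amalgamation property. Together with the assumed hereditary, isomorphism-closure, and joint embedding properties, this shows $\K$ is an amalgamation class.
\end{proof}

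The main obstacle I anticipate is bookkeeping around the language permutation group $\GammaL$: one must be careful that the partial automorphism fed into EPPA has an admissible language component and that all the composed maps have compatible language parts, so that the equality $\beta_1\circ\alpha_1 = \beta_2\circ\alpha_2$ genuinely holds ``also for the language part'' as Definition~\ref{def:amalgamation} demands. A secondary point requiring care is the normalisation step where we arrange both copies of $\str A$ inside $\str D$ to be honest substructures; since EPPA only speaks of isomorphisms between substructures, one should phrase the partial automorphism of $\str D$ precisely as $\gamma_2\circ\alpha_2\circ(\gamma_1\circ\alpha_1)^{-1}$ restricted to the substructure $\gamma_1\circ\alpha_1(A)$ of $\str D$, which is well-defined as an isomorphism onto the substructure $\gamma_2\circ\alpha_2(A)$ without any further adjustment. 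Everything else is a routine verification that compositions of embeddings are embeddings.
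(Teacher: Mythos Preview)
Your argument is correct and follows essentially the same route as the paper: form a joint embedding, pass to an EPPA-witness, and use the extended automorphism to align the two copies of $\str{A}$. The only cosmetic difference is that the paper composes the automorphism $\theta$ with the first embedding (setting $\beta_1=\theta\circ\beta_1'$, $\beta_2=\beta_2'$) whereas you compose $\theta^{-1}$ with the second; this is immaterial. Your discussion of the language component is in fact more explicit than the paper's own proof, though note that you need not assume the $\gamma_i$ or the partial automorphism have trivial language part: the equality $(\beta_1\circ\alpha_1)_L=(\beta_2\circ\alpha_2)_L$ follows from a direct computation in $\GammaL$ once $\theta_L$ is taken to agree with the language part of the partial automorphism it extends.
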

\begin{proof}
Let $\K$ be such a class and let $\str{A},\str{B}_1,\str{B}_2\in \K$, $\alpha_1\colon \str A \to \str B_1$, $\alpha_2\colon \str A\to \str B_2$ be as in Definition~\ref{defn:amalg}. Let $\str B$ be the joint embedding of $\str B_1$ and $\str B_2$ (that is, we have embeddings $\beta_1'\colon \str B_1\to \str B$ and $\beta_2'\colon \str B_2\to \str B$) and let $\str C$ be an EPPA-witness for $\str B$.

Let $\varphi$ be a partial automorphism of $\str B$ sending $\alpha_1(\str A)$ to $\alpha_2(\str A)$ and let $\theta$ be its extension to an automorphism of $\str C$. Finally, put $\beta_1 = \theta\circ\beta_1'$ and $\beta_2 = \beta_2'$. It is easy to check that $\beta_1$ and $\beta_2$ certify that $\str C$ is an amalgamation of $\str B_1$ and $\str B_2$ over $\str A$ with respect to $\alpha_1$ and $\alpha_2$.
\end{proof}
EPPA was introduced by Hodges, Hodkinson, Lascar, and Shelah~\cite{hodges1993b} to show the existence of ample generics (see \eg~\cite{Siniora2}). For this application an additional property of the class is needed.
\begin{definition}[APA~\cite{hodges1993b}]\label{defn:apa}
Let $\K$ be a class of finite $\GammaL$-structures. We say that $\K$ has the \emph{amalgamation property with automorphisms} (\emph{APA}) if for every $\str A, \str B_1, \str B_2 \in \mathcal C$ such that $\str A\subseteq \str B_1,\str B_2$ there exists $\str C\in \K$ which is an amalgamation of $\str B_1$ and $\str B_2$ over $\str A$, has $\str B_1,\str B_2\subseteq \str C$ and furthermore the following holds:

For every pair of automorphisms $f_1\in\Aut(\str B_1)$, $f_2\in \Aut(\str B_2)$ such that $f_i(A) = A$ for $i\in\{1,2\}$ and $f_1|_A = f_2|_A$, there is an automorphism $g\in \Aut(\str C)$ such that $g|_{B_i} = f_i$ for $i\in\{1,2\}$.
\end{definition}

Siniora and Solecki~\cite{solecki2009,Siniora} strengthened the notion of EPPA in order to get a dense locally finite subgroup of the automorphism group of the corresponding \Fraisse{} limit.

\begin{definition}[Coherent maps~\cite{Siniora}]
Let $X$ be a set and $\mathcal P$ be a family of partial bijections 
 between subsets
of $X$. A triple $(f, g, h)$ from $\mathcal P$ is called a \emph{coherent triple} if $$\dom(f) = \dom(h), \range(f ) = \dom(g), \range(g) = \range(h)$$ and $$h = g \circ f.$$

Let $X$ and $Y$ be sets, and $\mathcal P$ and $\mathcal Q$ be families of partial bijections between subsets
of $X$ and between subsets of $Y$, respectively. A function $\varphi\colon \mathcal P \to \mathcal Q$ is said to be a
\emph{coherent map} if for each coherent triple $(f, g, h)$ from $\mathcal P$, its image ($\varphi(f), \varphi(g), \varphi(h)$) in $\mathcal Q$ is also coherent.
\end{definition}
\begin{definition}[Coherent EPPA~\cite{Siniora}]
\label{defn:coherent}
A class $\K$ of finite $\GammaL$-structures is said to have \emph{coherent EPPA} if $\K$ has EPPA and moreover the extension of partial automorphisms
is coherent. That is, for every $\str{A} \in \K$, there exists $\str{B} \in \K$ such that $\str A\subseteq \str B$ and every
partial automorphism $f$ of $\str{A}$ extends to some $\hat{f} \in \Aut(\str{B})$ with  the property that the map $\varphi$ from the partial automorphisms of $\str{A}$ to $\Aut(\str{B})$ given by $\varphi(f) = \hat{f}$ is coherent. We also say that $\str B$ is a \emph{coherent EPPA-witness} for $\str A$.
\end{definition}

\section{Ramsey classes}
 For $L$-structures $\str{A},\str{B}$, we denote by ${\str{B}\choose \str{A}}$ the set of all embeddings $\str{A}\to\str{B}$. Using this notation, the definition of a Ramsey class gets the following form:

\begin{definition}[Ramsey class]
\label{def:Ramsey}
Let $L$ be a language.  A class $\K$ of finite $L$-structures is a \emph{Ramsey class} (or has the \emph{Ramsey property}) if for every two $L$-structures $\str{A}\in \K$ and $\str{B}\in\K$ and for every positive integer $k$ there exists a $L$-structure $\str{C}\in\K$ such that the following holds: For every partition of ${\str{C}\choose \str{A}}$ into $k$ classes there is $\widetilde{\str B} \in {\str{C}\choose \str{B}}$ such that ${\widetilde{\str{B}}\choose \str{A}}$ belongs to one class of the partition.  It is usual to shorten the last part of the definition to $\str{C} \longrightarrow (\str{B})^{\str{A}}_k$.
\end{definition}
Analogously to Observation~\ref{obs:eppaamalgamation} one can see the following:
\begin{observation}[Ne\v set\v ril, 1980's~\cite{Nevsetvril1989a}]\label{obs:ramseyamalgamation}
Every hereditary iso\-morphism-closed Ramsey class of finite $L$-structures with the joint embedding property is an amalgamation class. 
\end{observation}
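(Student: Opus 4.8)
The plan is to follow the proof of Observation~\ref{obs:eppaamalgamation} almost verbatim, with the EPPA-witness (and the automorphism extending a partial automorphism) replaced by a Ramsey object equipped with a suitable $2$-colouring. Since $\K$ is assumed hereditary, isomorphism-closed and to have the joint embedding property, it remains only to verify the amalgamation property from Definition~\ref{defn:amalg}. So fix $\str A,\str B_1,\str B_2\in\K$ together with embeddings $\alpha_1\colon\str A\to\str B_1$ and $\alpha_2\colon\str A\to\str B_2$; note that $\str A\in\K$ by heredity.

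First I would use the joint embedding property to produce some $\str D\in\K$ with embeddings $e_1\colon\str B_1\to\str D$ and $e_2\colon\str B_2\to\str D$, and then apply the Ramsey property to $\str A,\str D\in\K$ and $k=2$ to obtain $\str C\in\K$ with $\str C\longrightarrow(\str D)^{\str A}_2$. The crucial ingredient is the colouring: colour an embedding $g\in{\str C\choose\str A}$ with the first colour if there is an embedding $\beta\colon\str B_2\to\str C$ with $\beta\circ\alpha_2=g$ --- that is, if the copy $g$ of $\str A$ in $\str C$ extends to a copy of $\str B_2$ in the way prescribed by $\alpha_2$ --- and with the second colour otherwise. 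By the choice of $\str C$ there is a copy $\str D'=\psi(\str D)$ of $\str D$ in $\str C$, given by an embedding $\psi\colon\str D\to\str C$, such that all embeddings in ${\str D'\choose\str A}$ receive the same colour. Now $\psi\circ e_2\circ\alpha_2$ is an embedding of $\str A$ whose image lies in $\str D'$, and it visibly extends to a copy of $\str B_2$ via $\psi\circ e_2$, so the common colour is the first one; hence $\beta_1\circ\alpha_1$, where $\beta_1:=\psi\circ e_1\colon\str B_1\to\str C$, is another embedding of $\str A$ with image in $\str D'$ and so also has the first colour, and therefore extends to an embedding $\beta_2\colon\str B_2\to\str C$ with $\beta_2\circ\alpha_2=\beta_1\circ\alpha_1$. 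Then $\str C\in\K$ together with $\beta_1$ and $\beta_2$ is an amalgamation of $\str B_1$ and $\str B_2$ over $\str A$ with respect to $\alpha_1$ and $\alpha_2$ in the sense of Definition~\ref{def:amalgamation}, which completes the verification.

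The step I expect to be most delicate is deciding what to feed to the Ramsey property and which colouring to pick: the point is that $\str D$ should be taken to be a joint embedding of $\str B_1$ and $\str B_2$, so that inside the monochromatic copy $\str D'$ the distinguished copy of $\str A$ coming from the $\str B_2$-side forces the common colour to be the ``extendable'' one, while the copy of $\str B_1$ sitting inside that same $\str D'$ simultaneously supplies the embedding $\beta_1$. Everything else is routine bookkeeping with composites of embeddings, together with the observation --- exactly as in Observation~\ref{obs:eppaamalgamation} --- that an amalgamation need not be economical, so that the whole Ramsey object $\str C$, and not some substructure of it, already witnesses the amalgamation property.
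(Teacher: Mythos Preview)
Your proof is correct and follows essentially the same approach as the paper's: take a joint embedding of $\str B_1$ and $\str B_2$, apply the Ramsey property to it, and $2$-colour embeddings of $\str A$ according to whether they extend to one of the $\str B_i$; the monochromatic copy then forces the amalgamation. The only cosmetic differences are that the paper colours by extendability to $\str B_1$ rather than $\str B_2$, and your write-up is somewhat more careful in tracking that the extension is specifically along $\alpha_2$ (the paper's phrasing ``contained in a copy of $\str B_1$'' leaves this implicit).
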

\begin{proof}
Let $\K$ be such a class and let $\str{A},\str{B}_1,\str{B}_2\in \K$, $\alpha_1\colon \str A \to \str B_1$, $\alpha_2\colon \str A\to \str B_2$ be as in Definition~\ref{defn:amalg}. Let $\str B$ be the joint embedding of $\str B_1$ and $\str B_2$ and let $\str C\to (\str{B})^\str{A}_2$. 
Colour the elements of $\binom{\str{C}}{\str{A}}$ according to whether or not they are contained in a copy of $\str{B}_1$ in $\str{C}$.
 Because $\str{C}$ is Ramsey there is a monochromatic copy $\widetilde{\str{B}}\in {\str{C}\choose \str{B}}$.
 As it contains a copy of $\str{B}_1$, all the copies of $\str{A}$ in it are in a copy of $\str{B}_1$ in $\str{C}$. But this includes the $\str{A}$ which is in the copy of $\str{B}_2$ in $\widetilde{\str B}$.
It follows that $\str{C}$ contains an amalgamation of $\str{B}_1$ and $\str{B}_2$ over $\str{A}$.
\end{proof}
\section{Nešetřil's classification programme}
\label{sec:classification}

The celebrated (Cherlin and Lachlan's) \emph{classification programme of homogeneous structures}
aims to provide full catalogues of countable
homogeneous combinatorial structures of a given type. The most important cases where the classification
is complete are:
\begin{enumerate}
\item\label{cat1} Schmerl's catalogue of all homogeneous partial orders~\cite{Schmerl1979},
\item Lachlan and Woodrow's catalogue of all homogeneous simple graphs~\cite{Lachlan1980},
\item\label{cat3} Lachlan's catalogue of all homogeneous tournaments~\cite{lachlan1984countable},
\item Cherlin's catalogue of all homogeneous digraphs~\cite{Cherlin1998} (this generalises catalogues \ref{cat1} and \ref{cat3}),
\item Cherlin's catalogue of all ordered graphs~\cite{Cherlin2013}, and
\item Braunfeld's catalogue of homogeneous finite-dimensional permutation structures~\cite{SamPhD}. This was recently shown to be complete by Braunfeld and Simon~\cite{sam2,braunfeld2018classification}.
\end{enumerate}
Several additional catalogues are known. An extensive list is given in the upcoming Cherlin's monograph~\cite{Cherlin2013}.
Important for our discussion is also Cherlin's catalogue of metrically homogeneous graphs which is currently conjectured to be complete~\cite{Cherlin2013}.

All these results give us a rich and systematic source of homogeneous structures.

\subsection{Classification of Ramsey expansions}

Shortly after observing the link between amalgamation classes, homogeneous structures and Ramsey
classes, Ne\v set\v ril \cite{Nevsetvril1989a} studied the catalogue of homogeneous graphs and verified that the corresponding Ramsey results are already known. In 2005, he proposed a project to
analyse these catalogues and initiated the classification programme of Ramsey
classes (which we refer to as \emph{Ne\v set\v ril's classification programme})~\cite{Nevsetril2005,Hubicka2005a}. 
The overall idea is symbolised in \cite{Nevsetril2005} as follows:
\begin{center}
\begin{tikzpicture}[auto,
    box/.style ={rectangle, draw=black, thick, fill=white,
      text width=9em, text centered,
      minimum height=2em}]
     \tikzstyle{line} = [draw, thick, -latex',shorten >=2pt];
    \matrix [column sep=5mm,row sep=3mm] {
      \node [box] (Ramsey) {Ramsey\\ classes};
      &&\node [box] (amalg) {amalgamation classes};
      \\
\\
      \node [box] (lift) {Ramsey structures};
      &&\node [box] (lim) {(locally finite)\\ homogeneous structures};
      \\
    };
    \begin{scope}[every path/.style=line]
      \path (Ramsey)   -- (amalg);
      \path (amalg)   -- (lim);
      \path (lim)   -- (lift);
      \path (lift)   -- (Ramsey);
    \end{scope}
  \end{tikzpicture}
\end{center}
The individual arrows in the diagram can be understood as follows.
\begin{enumerate}
\item \emph{Ramsey classes $\longrightarrow$ amalgamation classes}.  By Observation~\ref{obs:ramseyamalgamation}, every hereditary iso\-morphism-closed Ramsey class of finite $L$-structures with the joint embedding property is an amalgamation class.

The assumptions of Observation~\ref{obs:ramseyamalgamation} are natural in the Ramsey context and it makes sense to restrict our attention only to the classes which satisfy them.
\item \emph{Amalgamation classes $\longrightarrow$ homogeneous structures}. By the \Fraisse{} theorem (Theorem~\ref{thm:unrestrictedEPPA}), every amalgamation class of $L$-structures with countably many mutually non-isomorphic structures has a \Fraisse{}~limit which is a (locally finite) homogeneous structure. 

The additional assumption about the amalgamation class having only countably many mutually non-isomorphic structures is a relatively mild one. However, there are interesting and natural Ramsey classes not satisfying it, such as the class of all finite ordered metric spaces (see Section~\ref{sec:metric}).
\item \emph{Ramsey structures $\longrightarrow$ Ramsey classes}. We call a structure \emph{Ramsey} if it is locally finite and its age is a Ramsey class and thus this connection follows by the definition.

\end{enumerate}
The difficult part of the diagram is thus the last arrow \emph{homogeneous structures $\longrightarrow$ Ramsey structures}. This motivates a question:
\begin{quote}
Is every locally finite homogeneous structure also a Ramsey structure?
\end{quote}
The answer is most definitely negative, for reasons to be discussed below. For this reason, the precise formulation of this
classification programme requires close attention.

It is a known fact that the automorphism group of every Ramsey structure fixes a linear order on
vertices. This follows at once from the connection with topological dynamics~\cite{Kechris2005}; see \cite[Proposition 2.22]{Bodirsky2015} for a combinatorial proof.
This is a very strong hypothesis---such a linear order is unlikely to appear in practice unless we began with a class of
ordered structures. So it often occurs that the 
age $\K$ of a homogeneous structure $\str{H}$ is not Ramsey.
In many such cases, it can be ``expanded'' to a Ramsey class $\K^+$ by adding an additional
binary relation $\leq$ to the language and considering every structure in $\K$ with
every possible order.  It is easy to check that $\K^+$ is an amalgamation class and
the \Fraisse{} limit of $\K^+$ thus exists and leads to a Ramsey structure $\str{H}^+$.
This Ramsey structure can be thought of as $\str{H}^+$ with an additional ``free'' or ``generic'' linear
order of vertices.

\begin{example}[Countable random graph]
Consider the class $\mathcal G$ of all finite graphs and
its~\Fraisse{} limit $\str{R}$. ($\str{R}$ is known as the \emph{countable random
graph}, or the \emph{Rado graph}, and is one of the structures in the Lachlan and Woodrow's catalogue.)  Because an order is not fixed by $\Aut(\str{R})$, we can
consider the class $\mathcal G^+$ of all finite graphs ``endowed'' with a linear order of vertices.
More formally, $\mathcal G^+$ is the class of all finite $L$-structures $\str{A}$, where $L$ consists of two binary relational symbols $E$ (for edges) and $\leq$ (for the order), $E_\str{A}$ is a symmetric
and irreflexive relation on $A$ (representing the edges of a graph on $A$) and $\leq_\str{A}$ is a linear order of $A$.
By the Ne\v set\v ril--R\"odl theorem (Theorem \ref{thm:NRoriginal}), $\mathcal G^+$ is Ramsey
and thus its~\Fraisse{} limit $\str{R}^+$ is a Ramsey structure.
In this sense, we completed the last arrow of Ne\v set\v ril's diagram.
\end{example}

Initially, the classification problem was understood in terms
of adding linear orders freely when they were absent, and
then confining one’s attention to classes with the linear order
present. This point of view is still implicit in \cite{Kechris2005}.

However, it turns out that it is necessary to consider more general expansions
of languages than can be afforded using a single linear order. Furthermore, the
topological dynamical view of~\cite{Kechris2005} clarifies what kind of Ramsey
expansion we actually should look for, and this can be expressed very
concretely in combinatorial terms.

On the other hand, if one allows more general expansions of the structure---for
example, naming every point---then the Ramsey property may be obtained in a
vacuous manner.  This means that a conceptual issue of identifying the
``correct'' Ramsey expansions needs to be resolved first before returning to
the technical issues of proving the existence of such an expansion, and
constructing it explicitly.
Nowadays, the conceptual issue may be considered to be satisfactorily resolved (at least
provisionally) and progress on the resulting combinatorial problems is one of
the main subjects of this habilitation.

We review the resolution of this issue and justify the following:
the last arrow in Ne\v set\v ril’s
diagram represents the search for an \emph{optimal expansion} of the
homogeneous structure to a larger language, in a precise sense,
and raises the question of the existence of such expansion.

Before entering into the technicalities associated with the conceptual issues,
we present a critical example in which the Ramsey property requires something
more than the addition of a linear order, and we describe the optimal solution
according to the modern point of view.

\begin{example}[Generic local order]
\label{example:S2}
An early example (given by Laflamme, Nguyen Van Th\'e and Sauer~\cite{laflamme2010partition}) of a structure
with a non-trivial optimal Ramsey expansion is the \emph{generic local order}.
This is a homogeneous tournament $\str{S}(2)$ (which we view as an $L$-structure where $L$ consists of binary relational symbol $E$) defined as follows.   Let $\mathbb
T$ denote the unit circle in the complex plane.  Define an oriented graph
structure on $\mathbb T$ by declaring that there is an arc from $x$ to $y$ iff
$0<\mathop{\mathrm{arg}}\nolimits(y/x)< \pi$.  Call $\vv{\mathbb T}$ the resulting oriented graph.  The
dense local order is then the substructure $\str{S}(2)$ of $\vv{\mathbb T}$ whose
vertices are those points of $\mathbb T$ with rational argument.

Another construction of $\str{S}(2)$ is to start from the order of rationals
(seen as a countable transitive tournament), randomly colour vertices with two
colours and then reverse the direction of all arrows between vertices of different
colours. In fact, this colouring is precisely the necessary Ramsey expansion.
We thus consider the class of finite $L^+$-structures where 
$L^+$ consists of a binary relation $E$ (representing the directed edges) and a
unary relation $R$ (representing one of the colour classes).
The linear ordering of vertices is implicit, but can be defined based
on the relations $E$ and $R$, putting $a\leq b$ if and only if either there
is an edge from $a$ to $b$ and they belong to the same colour class, or there
is an edge from $b$ to $a$ and they belong to different colour classes.
The Ramsey property then follows by a relatively easy application of the Ramsey theorem.
\end{example}
The general notion of expansion (or, more particularly, \emph{relational expansion}) that we work with comes from model theory.

\begin{definition}[Expansion and reduct]
Let $L^+=L^+_\mathcal R\cup L^+_\mathcal F$ be a language containing  language $L=L_\mathcal R\cup L_\mathcal F$, extending it by relational symbols. By this we mean $L_\mathcal R\subseteq L_\mathcal R^+$ and $L_\mathcal F = L^+_\mathcal F$ and the arities of the relations and functions which belong to both $L$ and $L^+$ are the same.
For every structure $\str{X}\in \Str(L^+)$, there is a unique structure $\str{A}\in \Str(L)$ satisfying $A=X$, $\rel{A}{}=\rel{X}{}$ for every $\rel{}{}\in L_\mathcal R$ and $\func{A}{}=\func{X}{}$ for every $\func{}{}\in L_\mathcal F$.
 We call $\str{X}$ a \emph{(relational) expansion} (or a \emph{lift}) of $\str{A}$ and $\str{A}$ is called the \emph{$L$-reduct} (or the \emph{$L$-shadow}) of $\str{X}$. 

 Given languages $L$ and $L^+$ and a class $\K$ of $L$-structures, a class $\K^+$  of $L^+$-structures is an \emph{expansion} of $\K$ if $\K$ is precisely the class of all $L$-reducts of structures in $\K^+$.
\end{definition}
As mentioned earlier, every $L$-structure $\str{H}$ can be turned to a Ramsey
structure by naming every point. This can be done by expanding the language $L$
by infinitely many unary relational symbols and putting every vertex of
$\str{H}$ to a unique relation. The Ramsey property then holds,
since there are no non-trivial embeddings between structures from the age of $\str H$. Clearly, additional
restrictions on the expansions need to be made. 

We now formulate a notion of ``canonical'' or ``minimal''
Ramsey expansion. We will give this first in purely combinatorial terms. In those terms, we seek a ``precompact
Ramsey expansion with the expansion property'' as defined
below. But to understand why this is canonical (\ie{}, natural and unique up to bi-definability), we need to invoke notions and 
non-trivial results of topological dynamics.

\begin{definition}[Precompact expansion~\cite{The2013}]
\label{defn:precompact}
Let $\mathcal K^+$ be an expansion of a class of structures $\K$.
We say that $\mathcal K^+$ is a \emph{precompact expansion} of $\mathcal K$ if for
every structure $\str{A} \in \mathcal K$ there are only finitely many
structures $\str{A}^+ \in \mathcal K^+$ such that $\str{A}^+$ is an expansion of
$\str{A}$.
\end{definition}
\begin{definition}[Expansion property~\cite{The2013}]
\label{defn:ordering}
Let $\mathcal K^+$ be an expansion of $\K$. For $\str{A},\str{B}\in \K$ we say
that $\str{B}$ has the \emph{expansion property} for $\str{A}$ if for every expansion $\str{B}^+\in \mathcal K^+$ of $\str{B}$ and for every expansion $\str{A}^+\in \mathcal K^+$ of $\str{A}$ there is an embedding $\str A^+\to\str{B}^+$.

$\mathcal K^+$ has the \emph{expansion property} relative to $\K$ if for every $\str{A}\in \K$
there is $\str{B}\in \K$ with the expansion property for $\str{A}$.
\end{definition}

Intuitively, precompactness means that the expansion is not very rich
and the expansion property then shows that it is minimal possible. 
To further motivate these concepts, we review the key connections to topological
dynamics. 

We consider the automorphisms group $\Aut(\str{H})$ as a Polish topological
group by giving it the topology of pointwise convergence.  Recall  that a
topological group $\Gamma$ is \emph{extremely amenable} if whenever $X$ is a
\emph{$\Gamma$-flow} (that is, a non-empty compact Hausdorff $\Gamma$-space on
which $\Gamma$ acts continuously), then there is a $\Gamma$-fixed point in $X$. See \cite{NVT14} for details.

In 1998, Pestov~\cite{Pestov1998free} used the classical Ramsey theorem to show
that the automorphism group of the order of rationals is extremely amenable. Two years later, Glasner and
Weiss~\cite{glasner2002minimal} proved (again applying the Ramsey theorem) that the
space of all linear orderings on a countable set is the universal minimal flow
of the infinite permutation group.  In 2005, Kechris, Pestov and Todor\v cevi\' c
introduced the general framework (which we refer to as \emph{KPT-correspondence}) connecting \Fraisse{} theory, Ramsey classes,
extremely amenable groups and metrizable minimal flows.
Subsequently, this framework was generalised to the notion of Ramsey expansions~\cite{The2013,NVT2009,Melleray2015,zucker2016topological} with main results as follows:

\begin{theorem}[{Kechris, Pestov, Todor\v cevi\' c \cite[Theorem 4.8]{Kechris2005}}]
\label{thm:KPT}
 Let $\str{H}$ be locally finite homogeneous $L$-structure. Then $\Aut(\str{H})$ is extremely amenable if and only if $\Age(\str{H})$ is a Ramsey class.
\end{theorem}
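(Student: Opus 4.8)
The plan is to prove Theorem~\ref{thm:KPT} via the classical characterization of extreme amenability through finite Ramsey-type colourings of the group action, specializing the general Kechris--Pestov--Todor\v{c}evi\'c machinery to the setting at hand. Write $\Gamma=\Aut(\str H)$ with the topology of pointwise convergence, so a neighbourhood basis of the identity is given by the pointwise stabilizers $\Gamma_{\str A}$ of finite substructures $\str A$ of $\str H$. Since $\str H$ is homogeneous and locally finite, for each finite substructure $\str A\in\Age(\str H)$ the coset space $\Gamma/\Gamma_{\str A}$ is naturally in bijection with $\binom{\str H}{\str A}$, the set of embeddings $\str A\to\str H$, and $\Gamma$ acts on this set on the left; moreover $\binom{\str H}{\str A}=\bigcup\{\binom{\str B}{\str A}:\str B\in\binom{\str H}{\str B}\}$ exhausts it via the chain of finite substructures. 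The key dictionary is: a finite colouring of $\binom{\str H}{\str A}$ is the same as a partition of the discrete coset space $\Gamma/\Gamma_{\str A}$, and monochromatic copies of $\str B$ inside $\str H$ correspond to $g$-translates (for $g\in\Gamma$) of a fixed finite ``window'' on which the colouring is constant.

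First I would establish the direction \emph{$\Age(\str H)$ Ramsey $\implies$ $\Gamma$ extremely amenable}. I would use the standard criterion (see~\cite{Kechris2005,Pestov1998free,NVT14}): $\Gamma$ is extremely amenable iff for every finite colouring $c\colon\Gamma\to\{1,\dots,k\}$, every finite $F\subseteq\Gamma$, and every $\varepsilon>0$ there is $g\in\Gamma$ such that $c$ is constant on $gF$ — or, in the metric/uniform reformulation appropriate to the non-discrete group, iff every finitely-supported right-uniformly continuous finite colouring of $\Gamma$ admits arbitrarily ``$F$-monochromatic'' translates. Such a colouring factors through some $\Gamma/\Gamma_{\str A}=\binom{\str H}{\str A}$, and the finite set $F$ together with $\str A$ determines a finite substructure $\str B\in\Age(\str H)$ such that $F$ lies inside the embeddings of $\str A$ into one fixed copy of $\str B$. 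Applying the Ramsey property to $\str A,\str B$ produces $\str C\in\Age(\str H)$ with $\str C\to(\str B)^{\str A}_k$; realizing $\str C$ as a substructure of $\str H$ (possible since $\str C\in\Age(\str H)$) and pulling back the colouring gives a copy $\widetilde{\str B}$ of $\str B$ in $\str H$ on which all copies of $\str A$ receive one colour, and the element of $\Gamma$ moving the reference copy of $\str B$ to $\widetilde{\str B}$ is the desired translating element. This shows $\Gamma$ has no nontrivial action without fixed point, i.e.\ is extremely amenable.

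For the converse, \emph{$\Gamma$ extremely amenable $\implies$ $\Age(\str H)$ Ramsey}, I would run the argument contrapositively, or directly via the universal flow. Suppose the Ramsey property fails for some $\str A,\str B\in\Age(\str H)$ and some $k$: then for every $\str C\in\Age(\str H)$ there is a bad $k$-colouring of $\binom{\str C}{\str A}$. Using a compactness argument over the directed system of finite substructures of $\str H$ (a König-type / inverse-limit argument, or an ultrafilter on the poset of finite colourings), one assembles a single $k$-colouring $c$ of $\binom{\str H}{\str A}$ such that no copy of $\str B$ in $\str H$ is $c$-monochromatic on its $\str A$-subcopies. This $c$ defines a continuous $\Gamma$-map from $\Gamma$ to the finite (hence compact) space $\{1,\dots,k\}^{\binom{\str H}{\str A}}$ of colourings, whose orbit closure $X$ is a compact $\Gamma$-flow; extreme amenability yields a fixed point, i.e.\ a $\Gamma$-invariant colouring $c^*$ in the orbit closure. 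But a $\Gamma$-invariant colouring of $\binom{\str H}{\str A}$ is constant (the action is transitive on $\binom{\str H}{\str A}$ by homogeneity), so $c^*$ is monochromatic — and one checks that being in the orbit closure of $c$ forces $c$ itself to be monochromatic on arbitrarily large finite pieces, in particular on the $\str A$-subcopies of some copy of $\str B$, contradicting the choice of $c$. Hence the Ramsey property holds.

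The main obstacle is making the topological-dynamics translation precise for a \emph{non-discrete} Polish group: one must be careful that the colourings involved are right-uniformly continuous (equivalently, factor through a coset space $\Gamma/\Gamma_{\str A}$ by a finite substructure), so that the finite Ramsey statement over $\Age(\str H)$ genuinely captures the whole group, and that the orbit-closure flow is built from such continuous data. The verification that being in the orbit closure of a ``bad'' colouring still exhibits large monochromatic finite windows — equivalently that a $\Gamma$-fixed colouring in the closure forces local constancy of the original — is the delicate point; it is exactly where local finiteness of $\str H$ (so that closures of finite sets are finite and the $\binom{\str H}{\str A}$ genuinely exhaust the coset spaces) and homogeneity (so that $\Gamma$ acts transitively on each $\binom{\str H}{\str A}$, identifying invariance with constancy) are both used. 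Everything else is a routine unwinding of the definitions of extreme amenability and of the Ramsey property in Definition~\ref{def:Ramsey}; I would cite~\cite{Kechris2005} and~\cite{NVT14} for the standard lemmas on extreme amenability rather than reprove them.
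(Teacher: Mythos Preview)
The paper does not give its own proof of Theorem~\ref{thm:KPT}; it is stated as a quoted result of Kechris, Pestov and Todor\v{c}evi\'c, with a citation to~\cite[Theorem~4.8]{Kechris2005}, and the surrounding text only adds the remark that the embedding formulation of the Ramsey property already forces rigidity.

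Your sketch is a correct outline of the standard argument from~\cite{Kechris2005} (as also exposed in~\cite{NVT14}): the identification of the coset space $\Gamma/\Gamma_{\str A}$ with $\binom{\str H}{\str A}$ via homogeneity, the reduction of right-uniformly continuous finite colourings of $\Gamma$ to colourings of embeddings, and, for the converse, the orbit-closure argument in the compact $\Gamma$-flow $\{1,\dots,k\}^{\binom{\str H}{\str A}}$ yielding a $\Gamma$-fixed (hence constant) colouring whose presence in the closure forces a monochromatic copy of $\str B$ for the original $c$. One minor organisational remark: the preliminary K\"onig-type compactness step you use to manufacture a single globally bad colouring is not strictly needed; one can instead prove directly that extreme amenability gives, for every colouring of $\binom{\str H}{\str A}$, a monochromatic copy of $\str B$ in $\str H$, and then recover the finitary statement $\str C\to(\str B)^{\str A}_k$ by a separate, routine compactness argument. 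Either arrangement is fine.
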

Theorem~\ref{thm:KPT} is often formulated with the additional assumption that $\Age(\str{H})$ is rigid (i.e. no structure in $\Age(\str{H})$ has non-trivial automorphisms).  This is however implied by our
definition of a Ramsey class (Definition~\ref{def:Ramsey}) which colours embeddings. This definition implies rigidity.
As mentioned earlier, in addition to having a rigid age, the automorphism group of $\str{H}$ must also fix a linear order.

Recall that a $\Gamma$-flow is \emph{minimal} if it admits no nontrivial closed $\Gamma$-invariant subset or, equivalently, if every orbit is dense. 
Among all minimal $\Gamma$-flows, there exists a canonical one, known as the \emph{universal minimal $\Gamma$-flow}.
Precompact Ramsey expansions with the expansion property relate to universal minimal flows as follows.
\begin{theorem}[{Melleray, Nguyen Van Th\'e, and Tsankov \cite[Corollary 1.3]{Melleray2015}}]
\label{thm:metrizable}
Let $\str{H}$ be a locally finite homogeneous structure and let $\Gamma=Aut(\str{H})$. The following are equivalent:
\begin{enumerate}
\item The universal minimal flow of $\Gamma$ is metrizable and has a comeagre orbit.
\item The structure $\str{H}$ admits a precompact expansion $\str{H}^+$ whose age has the Ramsey property, and has the expansion property relative to $\Age(\str{H})$.
\end{enumerate}
\end{theorem}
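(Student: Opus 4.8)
We indicate only the structure of the argument, which belongs to topological dynamics; full details are in~\cite{Melleray2015}. Everything revolves around one construction. Write $\Gamma=\Aut(\str H)$ and $\K=\Age(\str H)$. To any precompact relational expansion $\K^+$ of $\K$ in a language $L^+$ we associate the space $X_{\K^+}$ of all $L^+$-expansions $\str{H}'$ of $\str{H}$ (on the same vertex set $H$) all of whose finite substructures lie in $\K^+$, with $\Gamma$ acting coordinatewise. Precompactness says that only finitely many $L^+$-structures occur on any given finite subset of $H$, so $X_{\K^+}$ is a closed subspace of a product of finite discrete spaces, hence a compact $\Gamma$-flow; since $H$ is countable, $X_{\K^+}$ is second countable, hence metrizable. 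When $\K^+$ is moreover a \Fraisse{} class whose limit $\str H^+$ is an expansion of $\str H$, the point $\str H^+\in X_{\K^+}$ has stabiliser $\Aut(\str H^+)$, one checks that $X_{\K^+}$ equals the completion $\widehat{\Gamma/\Aut(\str H^+)}$ (using that every member of $\Age(\str H^+)$ embeds into $\str H^+$), and $\K^+$ has the expansion property relative to $\K$ if and only if $X_{\K^+}$ is a minimal flow.

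$(2)\Rightarrow(1)$. Given $\str H^+$ as in $(2)$, the class $\K^+:=\Age(\str H^+)$ is a reasonable precompact expansion of $\K$, so the $L$-reduct of its \Fraisse{} limit is $\str H$ and we may assume $\str H^+$ is that limit. Then $X_{\K^+}$ is minimal (expansion property), and since $\K^+$ is Ramsey, Theorem~\ref{thm:KPT} makes $\Aut(\str H^+)$ extremely amenable. A standard argument upgrades minimality to universality: any minimal $\Gamma$-flow $Y$ receives a $\Gamma$-epimorphism from $X_{\K^+}=\widehat{\Gamma/\Aut(\str H^+)}$, got by extending the uniformly continuous map $g\Aut(\str H^+)\mapsto g y_0$, where $y_0\in Y$ is an $\Aut(\str H^+)$-fixed point provided by extreme amenability. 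Hence $X_{\K^+}$ is the universal minimal flow $M(\Gamma)$, which is therefore metrizable, and its orbit through $\str H^+$ is comeagre, being dense (by minimality) and $G_\delta$ (a point of $X_{\K^+}$ lies in it iff it is $L^+$-isomorphic to $\str H^+$, which, $\str H^+$ being homogeneous, is a countable conjunction of open conditions).

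$(1)\Rightarrow(2)$. Let $\Gamma x_0$ be the comeagre orbit in $M=M(\Gamma)$ and $H=\Gamma_{x_0}$ its (closed) stabiliser. Comeagreness makes the orbit map $\Gamma/H\to M$ an open continuous bijection onto a dense set, and with metrizability this identifies $M$ with $\widehat{\Gamma/H}$; compactness of $M$ then forces $\Gamma/H$ to be precompact, which amounts to $H$ having only finitely many orbits on the copies of each fixed finite substructure of $\str H$. Adjoin to $L$ one new relation symbol for each such orbit, interpreted by that orbit: this yields a precompact relational expansion $\str H^+$ of $\str H$ which is homogeneous, has $\Aut(\str H^+)=H$, and satisfies $X_{\K^+}=\widehat{\Gamma/H}=M$; minimality of $M$ then says $\K^+:=\Age(\str H^+)$ has the expansion property relative to $\K$. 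By Theorem~\ref{thm:KPT} it remains only to prove that $H=\Aut(\str H^+)$ is extremely amenable.

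This last step is the heart of the matter, and it is where the hypotheses are really consumed. The mere existence of the flow $\widehat{\Gamma/H}$ gives nothing beyond amalgamation, and its minimality gives only the expansion property; the Ramsey property must be squeezed out of the \emph{universality} of $\widehat{\Gamma/H}=M(\Gamma)$ together with the precompactness of $\Gamma/H$. Concretely, to see that an arbitrary $H$-flow $Z$ has an $H$-fixed point, one builds from $Z$ (using precompactness of $\Gamma/H$) a compact $\Gamma$-flow fibred over $\widehat{\Gamma/H}$ with fibre $Z$, maps $M(\Gamma)$ onto it by universality and minimality, and extracts an $H$-fixed point of $Z$ from the image of $x_0$. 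Carrying out this fibred construction and the fixed-point transfer rigorously---and checking along the way that the expansion recovered from $H$ is genuinely relational and precompact---is the substance of~\cite{Melleray2015}. We note finally that the comeagre-orbit hypothesis is precisely what permits the identification $M=\widehat{\Gamma/H}$, and that for $\Gamma=\Aut(\str H)$ it is in fact automatic once $M$ is metrizable.
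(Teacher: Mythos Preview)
The paper does not supply its own proof of this theorem: it is stated as a cited result from~\cite{Melleray2015} and used as background for the discussion of canonical Ramsey expansions, so there is nothing to compare your attempt against in the paper itself.

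That said, your sketch is a faithful outline of the argument in~\cite{Melleray2015} (building on~\cite{Kechris2005,The2013,zucker2016topological}): the identification of $X_{\K^+}$ with the completion $\widehat{\Gamma/\Aut(\str H^+)}$, the passage from Ramsey plus expansion property to universality and minimality in $(2)\Rightarrow(1)$, and, in $(1)\Rightarrow(2)$, the recovery of a precompact expansion from the stabiliser of a point in the comeagre orbit together with the co-induced flow construction to deduce extreme amenability, are exactly the ingredients used there. Your closing remark that for non-archimedean Polish groups the comeagre-orbit hypothesis is automatic once $M(\Gamma)$ is metrizable is also correct, though that is a separate theorem (Ben~Yaacov--Melleray--Tsankov, Zucker) rather than part of the proof of the equivalence as stated.
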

Because the metrizable minimal flow is unique, a corollary of
Theorem~\ref{thm:metrizable} is that for a given homogeneous structure $\str{H}$
there is, up to bi-definability, at most one Ramsey expansion $\str{H}^+$ such that
$\Age(\str{H}^+)$ is a precompact Ramsey expansion of $\Age(\str{H})$ with the
expansion property (relative to $\str{H})$. We will thus call such an expansion
the \emph{canonical Ramsey expansion}.

\medskip

The classification programme of Ramsey classes thus turns into two questions.
Given a locally finite homogeneous $L$-structure $\str{H}$, we ask the following:
\begin{enumerate}[label=Q\arabic*]
 \item \label{Q1} Is there a Ramsey structure $\str{H}^+$ which is a (relational) expansion of $\str{H}$
  such that $\Age(\str{H})^+$ is a precompact expansion of $\Age(\str{H})$? (Possibly with $\str{H}^+=\str{H}$.)
\end{enumerate}
If the answer to \ref{Q1} is positive, we know that $\str{H}^+$ can be
chosen so that $\Age(\str{H})^+$ has the expansion property relative to
$\Age(\str{H})$~\cite[Theorem 10.7]{Kechris2005}. We can moreover ask.
\begin{enumerate}[label=Q\arabic*,resume]
 \item \label{Q2}
If the answer to \ref{Q1} is positive, can we give an explicit description of $\str{H}^+$ which
additionally satisfies that the $\Age(\str{H}^+)$ has the expansion property with respect to $\Age(\str{H})$?
In other words, can we describe the canonical Ramsey expansion of $\str{H}$?
\end{enumerate}
In 2013, the classification programme in this form was first completed by
Jasi{\'n}ski, Laflamme, Nguyen Van Th{\'e} and Woodrow~\cite{Jasinski2013}
for the catalogue of homogeneous digraphs. More examples are discussed below.

\medskip

\begin{remark}
In addition to the universal minimal flow (Theorem~\ref{thm:metrizable}),
by a counting argument given by Angel, Kechris and Lyons~\cite{AKL14},
knowledge of an answer to question \ref{Q2} often gives amenability of
$\Aut(\str{H})$ and under somewhat stronger assumptions also shows that
$\Aut(\str{H})$ is uniquely ergodic. See
\cite{sokic2015semilattices,PawliukSokic16,jahel2019unique} for an initial
progress on the classification programme in this direction.
\end{remark}
\subsection{Classification of EPPA classes}

In the light of recent connections between EPPA and Ramsey classes, we can extend
this programme to also provide catalogues of classes with EPPA. The basic
question for a given a locally finite homogeneous $\GammaL$-structure $\str{H}$ is simply:

\begin{enumerate}[label=Q\arabic*,resume]
 \item \label{Q3} Does the class $\Age(\str{H})$ have EPPA?
\end{enumerate}
This is an interesting question from the combinatorial point of view
and again has number of applications. Motivated by a group-theoretical context one
can additionally consider the following questions:
\begin{enumerate}[label=Q\arabic*,resume]
 \item \label{Q4} If the answer to \ref{Q3} is positive, does it have coherent EPPA (Definition~\ref{defn:coherent})?
 \item \label{Q5} If the answer to \ref{Q3} is positive, does it have APA (Definition~\ref{defn:apa})?
\end{enumerate}
In this sense, the classification was first considered by Aranda, Bradley-Williams, Hubi{\v c}ka, Karamanlis, Kompatscher, Kone{\v c}n{\'y} and Pawliuk for metrically homogeneous graphs~\cite{Aranda2017}.
\begin{remark}[On group-theoretical context]

By a result of Kechris and Rosendal \cite{Kechris2007}, a positive answer to \ref{Q3} implies amenability of $\Aut(\str{H})$.  This is a sufficient
but not a necessary condition.  For example, the automorphism group of the order of rationals is
amenable (because it is extremely amenable) but does not have EPPA.

A positive answer to \ref{Q4} implies the existence of a dense locally finite
subgroup~\cite{solecki2009,Siniora}. The first known example, where an answer to
question~\ref{Q4} seems to be non-trivial (and presently open) is the class of
two-graphs where EPPA was shown by Evans, Hubi\v cka, Kone\v cn\'y and Ne\v set\v ril~\cite{eppatwographs}.
The existence of a dense locally finite subgroup here however follows by a different argument.
\end{remark}

\medskip
\subsection{Known classifications}

The Ramsey part of the classification programme was completed for the following
catalogues:
\begin{enumerate}
\item The catalogue of homogeneous graphs (Ne\v set\v ril~\cite{Nevsetvril1989a}).
We remark that by today's view of the programme, the original 1989 results are
incomplete because only expansions by free orderings are considered. The
remaining cases (of disjoint unions of cliques and their complements) are
however very simple.
\item The catalogue of directed graphs (Jasi{\'n}ski, Laflamme, Nguyen Van Th{\'e} and Woodrow~\cite{Jasinski2013}).
\item Completing the first two catalogues also covers all cases of the catalogue of homogeneous ordered graphs
with the exception of the class of all finite partial orders with a linear extension which is known to be Ramsey, too (Section~\ref{sec:partialorders}).
\item The conjectured-to-be-complete catalogue of metrically homogeneous graphs (Aranda, Bradley-Williams, Hubi{\v c}ka, Karamanlis, Kompatscher, Kone{\v c}n{\'y}, Pawliuk~\cite[Theorem 1.1]{Aranda2017}).
\end{enumerate}
Even though this point has apparently not been explicitly stated before, the EPPA classification
is finished for the catalogue of homogeneous graphs. In fact, this catalogue consists
of only 5 types of structures and their complements:
\begin{enumerate}
 \item The countable random graph $\str{R}$. $\Age(\str{R})$ is the class of all finite graphs for which
EPPA was shown by Hrushovski~\cite{hrushovski1992}.

Coherent EPPA was given by Siniora and Solecki~\cite{Siniora} and APA is trivial for every free amalgamation class.
 \item Generic $K_k$-free graphs $\str{R}_k$, $k\geq 3$. $\Age(\str{R}_k)$ is the
class of all $K_k$-free graphs. For $k=3$, EPPA was shown by Herwig~\cite{Herwig1995} and he
later generalised the construction to $k>3$~\cite{herwig1998}.

Again, the existence of coherent EPPA extensions (using a construction by
Hodkinson and Otto~\cite{hodkinson2003}) was verified by Siniora and
Solecki~\cite{Siniora} and APA is trivial.
 \item The class of all graphs consisting of at most $n$ cliques each of size at most $k$
for a given $n,k\in \mathbb N\cup \{\infty\}$, $n+k=\infty$.
Proving (coherent) EPPA for the ages of these structures is an easy exercise (building on the fact
that every partial permutation extends coherently to a permutation).

The class has APA if and only if $n,k\in\{1,\infty\}$. If $n\geq 2$ is finite,
one may consider an amalgamation of an $n$-anti-clique (that is, a graph with $n$
vertices and no edges) and a 2-anti-clique over the empty graph. Similarly, for $k\geq
2$ finite, one can consider an amalgamation of a $k$-clique and a $2$-clique over a vertex.
These amalgamations are counter-examples to APA.

In this situation, it is possible to modify the structures by 
adding vertices representing imaginaries (Section~\ref{sec:equivalences}) and 
functions representing the algebraic closures (Section~\ref{sec:closures}), including, possibly, constants
for the algebraic closure of the empty set. The age
of the resulting structure then has coherent EPPA and APA.
\end{enumerate}
It is also clear that if $\K$ is a class of graphs with EPPA, then the class of all complements
of graphs in $\K$ has EPPA, too.

EPPA for ages of homogeneous directed graphs was analysed by Pawliuk and Soki{\'c}~\cite{PawliukSokic16}. Their work
leaves several open cases: $n$-partite tournaments, semi-generic tournaments (for both these cases EPPA was claimed recently by Hubi\v{c}ka, Jahel, Kone{\v c}n{\'y}, and Sabok~\cite{HubickaSemigeneric}), tournaments (which present a well known open problem in the area asked in 2000 by Herwig and Lascar~\cite{herwig2000}), directed graphs omitting an independent set of size $n\geq 2$ and 2-covers of generic tournaments. 
The last two cases appear to be very similar to the case of tournaments.

Finally, Aranda, Bradley-Williams, Hubi{\v c}ka, Karamanlis, Kompa\-tscher, Kone{\v c}n{\'y} and Pawliuk studied EPPA for ages of metric spaces associated with metrically homogeneous graphs (and, for the first time, considered both the Ramsey property and EPPA together) and characterised all structures in the catalogue with the exception of special cases of antipodal metrically homogeneous graphs~\cite[Theorem 1.2]{Aranda2017}. 
For the Ramsey property (and partly also for EPPA), the analysis of metrically homogeneous graphs was, for the first time,
done using the general results which are introduced in Section~\ref{sec:general}. 
That work can also be seen as a confirmation of the effectivity of these methods.
EPPA for the remaining case (of antipodal metric spaces) was recently proved by Kone\v cn\'y~\cite{Konecny2019a} generalising a result
for diameter three by Evans, Hubi{\v c}ka, Kone{\v c}n{\'y} and Ne\v set\v ril~\cite{eppatwographs}.

Results on metrically homogeneous graphs are all based on a close study of \emph{completion algorithms} for partial structures. 
The completion algorithm introduced then led to a new line
of research on generalised metric spaces~\cite{Hubicka2017sauerconnant,Hubicka2017sauer,Konecny2018bc,Konecny2018b}. This in turn led to new tools
for proving simplicity of the automorphism groups of a wide range of structures
\cite{Evanssimplicity}, developing a method of Tent and Ziegler with its roots in model
theory (stability theory) \cite{Tent2013,Tent2013b,Li2018}. As a result, there is a new framework for
understanding various structural properties of these combinatorial structures
\cite{Hubickacycles2018} which may even lead to a more profound understanding of their
classification.

\section{General constructions}
\label{sec:general}
In this section, we present general theorems which are then used to show that a given
class of structures has EPPA or the Ramsey property. These results unify essentially
all earlier results in the area under a common framework as discussed in Section~\ref{sec:examples}
which also lists all known examples which were not yet covered by this work.

\subsection{Unrestricted theorems}
The starting points for subsequent constructions of EPPA-witnesses and Ramsey
structures are the following two theorems. We will refer to them as
\emph{unrestricted theorems}, because no restrictions on the constructed structures are given.

\begin{theorem}[{Hubi\v cka, Kone\v cn\'y, Ne\v set\v ril 2019~\cite[Theorem 1.3]{Hubicka2018EPPA}}]
\label{thm:unrestrictedEPPA}
Let $\GammaL$ be a finite language equipped with a permutation group where all function symbols are unary (no restrictions are given on the relational symbols) and $\str{A}$
be a finite $\GammaL$-structure. Then there exists a finite coherent EPPA-witness $\str{B}$ for $\str A$.

Consequently, the class $\Str(\GammaL)$ of all finite $\GammaL$-structures has coherent EPPA.
\end{theorem}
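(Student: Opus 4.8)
The plan is to build the EPPA-witness $\str{B}$ for a given finite $\GammaL$-structure $\str{A}$ directly, using a "valuation" construction in the spirit of Hrushovski's original argument and its refinements by Herwig--Lascar and Hodkinson--Otto, adapted to the extended formalism of $\GammaL$-structures with unary set-valued functions. First I would reduce to the case where $\str{A}$ is closed, i.e. $\cl_{\str{A}}(A) = A$; since all function symbols are unary, the closure of a single vertex is obtained by iterating the finitely many unary functions, so the closure of any finite set is finite, and passing to a finite closed superstructure does not lose generality because any partial automorphism of $\str{A}$ extends to one of the closure. Next I would fix the group $\GammaL$ acting on the finite language $L$; the key point introduced by the permutation group is that a partial automorphism now carries a language part $f_L \in \GammaL$, so the automorphisms of $\str{B}$ we construct must permute $L$ accordingly, and the combinatorial bookkeeping has to track a $\GammaL$-component throughout.

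The main construction: let $\Gamma$ be the group of all permutations of $A$ (the vertex set), and consider the "Herwig--Lascar style" witness whose vertex set is a suitable subset of $A \times M$, where $M$ is a large finite group (or a finite set of "names") chosen so that every partial automorphism of $\str{A}$ can be coded as the restriction of an element of a permutation group acting on the second coordinate. Concretely, I would take $M$ to be a finite group into which all the relevant partial maps embed coherently — for coherence it is essential that the extension map $f \mapsto \hat f$ respects composition of coherent triples, which is exactly what is achieved by realizing everything inside a single ambient group and letting $\hat f$ be (left) multiplication or conjugation by a fixed group element coding $f$. The relations $\rel{B}{}$ and unary functions $\func{B}{}$ on $B$ are pulled back from $\str{A}$ along the first-coordinate projection, with the relation tuples "twisted" by the second coordinate so that the projection is a homomorphism-embedding onto $\str{A}$ and so that the required automorphisms exist. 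Because the functions are unary, $\func{B}{}(x)$ for $x = (a,m)$ can be defined fiberwise as $\{(a',m) : a' \in \func{A}{}(a)\}$ (or with an appropriate shift), which keeps $B$ closed and keeps the projection a genuine embedding on closures; this is precisely where the unarity hypothesis is used — with higher-arity functions the fiber structure of the range would not be consistent under the twisting.

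The verification splits into three parts. (1) $\str{A}$ embeds into $\str{B}$: pick a section of the projection over a distinguished coset/element of $M$. (2) Every partial automorphism $f = (f_L, f_A)$ of $\str{A}$ extends to an automorphism $\hat f = (f_L, \hat f_B)$ of $\str{B}$: here $\hat f_B$ acts on the first coordinate by $f_A$ where defined and on the second coordinate by the fixed group element coding $f$, and one checks using the definition of homomorphism/embedding (conditions (a) and (b) for relations and functions, now including the $f_L$-permutation of $L$) that this is an automorphism; the coherence of $f \mapsto \hat f$ follows because coding a coherent triple $(f,g,h)$ with $h = g\circ f$ yields group elements with the same composition relation, so the triple $(\hat f, \hat g, \hat h)$ is coherent by construction. (3) $\str{B}$ is finite: immediate from $|B| \le |A|\cdot|M|$ and $|M|$ finite.

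The hard part will be step (2) done \emph{coherently and simultaneously for all partial automorphisms} — i.e. choosing the finite group $M$ and the twisting of the relations so that a single family of automorphisms of $\str{B}$ extends \emph{every} partial automorphism of $\str{A}$ while the assignment $f\mapsto \hat f$ is a coherent map, and doing this in the presence of the language permutation group $\GammaL$ (so that $M$ must in effect carry a $\GammaL$-equivariant structure). Managing the interaction between the $\GammaL$-action on $L$ and the $M$-coordinate, and checking that arbitrary relational symbols (with no arity bound) cause no obstruction, is the delicate bookkeeping; once the group-theoretic coding is set up correctly, the rest is routine verification against the definitions of embedding and of coherent triple. The final sentence of the theorem, that $\Str(\GammaL)$ has coherent EPPA, is then immediate: $\Str(\GammaL)$ is hereditary and isomorphism-closed and contains every $\str{B}$ we construct, so the witness stays inside the class.
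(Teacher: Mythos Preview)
The paper under review is a survey and does not itself give a proof of this theorem; it cites the external reference and, in the accompanying history remark, identifies the method only as the \emph{valuation construction} in the style of Hodkinson--Otto, explicitly contrasting it with the group-theoretical (Mackey-style) arguments. So there is no in-paper proof to compare against beyond that one sentence of description.

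That said, your sketch has a genuine gap at the central mechanism. You write that $\hat f_B$ ``acts on the first coordinate by $f_A$ where defined and on the second coordinate by the fixed group element coding $f$'', but $f_A$ is partial, so this does not define a bijection of $B\subseteq A\times M$. The standard repair---extend $f_A$ to a full permutation $\pi$ of $A$ and put $\hat f_B(a,m)=(\pi(a),g_f\cdot m)$---then fails to yield an automorphism of $\str B$ if the relations and unary functions on $\str B$ are, as you say, ``pulled back from $\str A$ along the first-coordinate projection'' (even with a uniform twist on the $M$-coordinate), because $\pi$ is not an automorphism of $\str A$ off $\dom f_A$. The entire content of the valuation construction referenced by the paper is that the second coordinate is not a single group element but a \emph{valuation function}, and the relations of $\str B$ are defined through these valuations so that the error of $\pi$ outside $\dom f_A$ is cancelled by a compensating action on the valuation coordinate. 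Your fibrewise definition of $\func{B}{}$ (keeping the same $m$) has no such correction built in and will not be preserved by $\hat f_B$. Coherence is a second, independent issue you have not really handled: arranging a map from partial automorphisms of $\str A$ into a finite group $M$ that respects composition along all coherent triples is itself a nontrivial construction (this is essentially the hard content of the Herwig--Lascar and Ribes--Zalesskii line of argument), and you have assumed such a coding rather than built one.
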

For a language $L$ containing a binary relation $\leq$, we say that an $L$-structure $\str{A}$
is \emph{ordered} if $\leq_\str{A}$ is a linear order of $A$.
\begin{theorem}[{Hubi\v cka, Ne\v set\v ril 2019~\cite[Theorem 2.19]{Hubicka2016}}]
\label{thm:unrestrictedRamsey}
Let $L$ be a language containing a binary relation $\leq$ and $\str{A}$, $\str{B}$
be finite ordered $L$-structures. Then there exists a finite ordered $L$-structure $\str{C}$
such that $\str{C}\longrightarrow (\str{B})^\str{A}_2$.

Consequently, the class $\OStr(L)$ of all finite ordered $L$-structures is Ramsey.
\end{theorem}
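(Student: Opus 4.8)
The plan is to establish the partition relation $\str{C}\longrightarrow(\str{B})^{\str{A}}_2$ by running the Partite Construction of Ne\v set\v ril and R\"odl \cite{Nevsetvril1977} directly on ordered $L$-structures, so that the set-valued function symbols are carried along the construction, and then to deduce that $\OStr(L)$ is Ramsey by the usual bootstrap from two colours to $k$ colours. First I would reduce to a finite language: since $\str{A}$ and $\str{B}$ are finite, only finitely many relation and function symbols of $L$ are non-empty on them, and all remaining symbols may be declared empty on the structure we build, so assume $L$ finite. Put $N=|B|$ and fix the linear order $b_1<\cdots<b_N$ of $\str{B}$. The construction manipulates \emph{partite structures}: finite ordered $L$-structures whose vertex set is split into $N$ parts $X_1,\dots,X_N$ that are convex for the order (every vertex of $X_i$ precedes every vertex of $X_{i+1}$), equipped with a distinguished family of copies of $\str{B}$, each of which is a transversal meeting $X_i$ in exactly its $i$-th vertex for all $i$. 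Because everything is ordered, all copies of $\str{A}$ and $\str{B}$ are rigid, and one can speak unambiguously of the ``pattern'' that a copy of $\str{A}$ makes on the parts.

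The heart of the argument is the \emph{Partite Lemma}: if $\str{D}$ is a partite structure in which every copy of $\str{A}$ lies inside a single part, then there is a partite structure $\str{E}$ with $\str{E}\longrightarrow(\str{D})^{\str{A}}_2$, obtained from $\str{D}$ by a standard Ramsey amplification in the part carrying the copies of $\str{A}$; this reduces to the finite Ramsey theorem, and the functions are irrelevant here since the relevant copies of $\str{A}$ sit in one part. The construction proper then proceeds by \emph{partite amalgamation}: one starts from a picture $\str{P}_0$ (a suitable partite structure assembled out of copies of $\str{B}$), enumerates the finitely many $\str{A}$-patterns occurring in it, and treats them one at a time, each time amalgamating into the current structure copies of the Ramsey object furnished by the Partite Lemma, along the relevant sub-picture. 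Two points make this go through for $L$-structures. First, since all pieces are ordered, their orders can always be interleaved consistently, so the result is again an ordered partite structure. Second --- and this is where the set-valued functions require care --- every amalgamation is performed as a \emph{free amalgamation over a substructure that is closed} (i.e.\ contains the closures, hence all function-values, of all of its tuples); consequently no relation and no function-value is ever created on a tuple straddling two pieces, and the building-block copies of $\str{B}$ stay closed throughout. Hence in the final structure every value $\func{C}{}(\bar{x})$ is inherited from, and contained in, a single such copy of $\str{B}$. After finitely many steps the resulting partite structure, with its partition forgotten, is an ordered $L$-structure $\str{C}$ with $\str{C}\longrightarrow(\str{B})^{\str{A}}_2$, and every copy of $\str{B}$ it contains genuinely carries an $L$-substructure isomorphic to $\str{B}$.

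For the ``consequently'' part, given $\str{A},\str{B}\in\OStr(L)$ and $k\ge 2$, choose $\str{C}_1$ with $\str{C}_1\longrightarrow(\str{B})^{\str{A}}_2$, then $\str{C}_2$ with $\str{C}_2\longrightarrow(\str{C}_1)^{\str{A}}_2$, and so on; after $\lceil\log_2 k\rceil$ iterations the standard product-colouring argument gives $\str{C}\in\OStr(L)$ with $\str{C}\longrightarrow(\str{B})^{\str{A}}_k$, which is exactly the Ramsey property of $\OStr(L)$. The step I expect to be the main obstacle is the bookkeeping in the partite amalgamation that keeps each gluing both free and over a closed substructure: this is precisely where the passage from relational to set-valued function symbols bites, as one must ensure that iterated amalgamations never let a function-value leak out of a copy of $\str{B}$ nor become multiply defined on an overlap of two copies --- the job done in \cite{Hubicka2016} by working systematically with the closure operator determined by the functions. (The permutation of the language plays no role in this theorem, which is stated for ordinary $L$-structures.)
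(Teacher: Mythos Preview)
Your outline follows the classical Ne\v set\v ril--R\"odl scheme, and the paper confirms (in the remark immediately following the theorem) that the partite construction is indeed the method. However, the paper also states explicitly that the only known proof for languages with function symbols of arbitrary arity uses a \emph{recursive} variant of the partite construction, introduced for this purpose in \cite{Hubicka2016}; your sketch is the non-recursive version and does not reach the general case.

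The gap is in your Partite Lemma. First, the formulation is off: in the standard scheme the Partite Lemma treats \emph{transversal} copies of $\str{A}$ and is proved by a Hales--Jewett product argument, not by ``amplifying the part carrying the copies of $\str{A}$''. If all copies of $\str{A}$ genuinely sat inside a single part, then amplifying that part would already require the Ramsey statement for ordered $L$-structures on that part, which is what you are proving. Second --- and this is the substantive point --- the claim that ``the functions are irrelevant here'' is exactly where the argument fails once $L$ has function symbols of arity $\geq 2$. In the Hales--Jewett power underlying the Partite Lemma, a tuple whose entries lie in different coordinate copies of $\str{D}$ must be assigned a value of $\func{}{}$, and there is no canonical choice that makes every combinatorial line an embedded closed copy of $\str{D}$ while remaining globally consistent. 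Your discipline of amalgamating freely over closed substructures is correct for the outer partite \emph{construction}, but the Partite \emph{Lemma} is not a free amalgamation and the closure problem resurfaces there. The resolution in \cite{Hubicka2016} is to prove the Partite Lemma itself by an inner partite construction that invokes an already-established smaller instance of the full theorem --- this is the recursion the paper singles out as ``the final, and substantial, strengthening''. Without it, your argument covers only relational languages (or at best unary functions), which was the state of the art prior to \cite{Hubicka2016}.
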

Recall that (under mild assumptions) every Ramsey class fixes a linear order (Section~\ref{sec:classification}).
For this reason, the assumption on structures being
ordered in Theorem~\ref{thm:unrestrictedRamsey} can not be omitted and thus Theorem~\ref{thm:unrestrictedRamsey}
is the most general unrestricted Ramsey theorem for finite $L$-structures.

Except for
degenerated examples, EPPA classes never consists of ordered structures: automorphism
groups of finite linearly ordered chains are trivial and thus can not extend any
non-trivial partial automorphism (such as one sending a vertex to another).
Classes with EPPA and Ramsey property are thus basically disjoint.
Yet the similarity between both theorems shows that both types of classes are related.

Despite the compact formulations, both theorems are a result of a long development.

\begin{remark}[History of unrestricted EPPA results]
Theorem~\ref{thm:unrestrictedEPPA} is a generalisation of the original 1992 result
of Hrushovski~\cite{hrushovski1992} for graphs and Herwig's strengthening to
relational structures~\cite{Herwig1995}. A variant for structures
with relations and unary function was shown by Evans, Hubi\v cka, Ne\v set\v ril
\cite{Evans3}, aiming to solve problems arising from the study of sparse graphs~\cite{Evans2}.
The notion of $\GammaL$-structures is motivated by a lemma on permomorphisms used
by Herwig~\cite[Lemma~1]{herwig1998}. It was noticed by Ivanov~\cite{Ivanov2015} that
this lemma is of an independent interest and can be used to show EPPA for classes
with definable equivalences.

There are multiple strategies for prove Theorem~\ref{thm:unrestrictedEPPA} for
relational structures, including group-theoretical ones (sometimes referred to as Mackey's
constructions) \cite{hrushovski1992,herwig2000,sabok2017automatic} and an easy combinatorial construction based on
intersection graphs~\cite{herwig2000}. The approach taken in our proof is new,
inspired by a related result of Hodkinson and Otto~\cite{hodkinson2003}.
We refer to it as a \emph{valuation construction}.
\end{remark}

\begin{remark}[History of unrestricted structural Ramsey results]
Generalising earlier results for colouring vertices~\cite{folkman1970,Nevsetvril1976b,Nevsetvril1977} and edges
of graphs, Theorem~\ref{thm:unrestrictedRamsey} for relational structures was proved by
Ne{\v{s}}et{\v{r}}il and R{\"o}dl in 1977~\cite{Nevsetvril1977} and, independently,
by Abramson and Harrington in 1978~\cite{Abramson1978}. A strengthening of this theorem
for classes of structures with relations and unary functions is relatively
easy to prove and was done in a special case by Hubi\v cka and Ne\v set\v ril~\cite{Hubivcka2014} (for bowtie-free graphs)
and independently by Soki{\'c}~\cite{Sokic2016} (for structures with unary functions only). The proof
strategies used in both of these papers turned out to be unnecessarily complex. For both Ramsey and EPPA, unary relations
and functions can be added by an incremental construction on top of an existing
Ramsey structure in a relational language. This general phenomenon is discussed
by Hubi\v cka and Ne\v set\v ril~\cite[Section 4.3.1]{Hubicka2016}.

The final, and substantial, strengthening was to introduce a construction for
structures in languages with function symbols of higher arities.
It is interesting to note that while there are multiple proofs of the unrestricted
theorem for structures in relational languages~\cite{Nevsetvril1989,PromelBook,Sauer2006}, the only known proof of Theorem~\ref{thm:unrestrictedRamsey}
uses a recursive variant of the \emph{partite construction}---presently the most
versatile method of constructing Ramsey objects developed by Ne\v set\v ril and R\"odl
in a series of papers since late 1970's~\cite{Nevsetvril1976,Nevsetvril1977,Nevsetvril1979,Nevsetvril1981,Nevsetvril1982,Nevsetvril1983,Nevsetvril1984,Nevsetvril1987,Nevsetvril1989,Nevsetvril1990,Nevsetvril2007}.
The recursive variant of the partite construction was introduced by Hubi\v cka and Ne\v set\v ril
to prove Theorem~\ref{thm:unrestrictedRamsey}. A special case was independently used
by Bhat, Ne\v set\v ril, Reiher and R\"odl to obtain the Ramsey property of the class
of all finite ordered partial Steiner systems~\cite{bhat2016ramsey}.
\end{remark}

\begin{remark}[Infinitary structural Ramsey theorems]
An infinitary variant of the unrestricted structural Ramsey theorem for graphs is shown by
Sauer~\cite{Sauer2006} (generalising work of Devlin~\cite{devlin1979}).  This is a highly non-trivial strengthening of the finitary version with
additional consequences for the automorphism groups~\cite{zucker2017}.
A generalisation for structures in a finite language with relations and
unary functions was recently claimed by Balko, Chodounsk\'y, Hubi\v cka,
Kone\v cn\'y and Vena~\cite{Hubickabigramsey}.
The first restricted theorems in this area were given recently by Dobrinen~\cite{dobrinen2017universal,dobrinen2019ramsey}.
Her proofs combine many techniques and are very technically challenging.
\end{remark}
\subsection{Sparsening constructions}
In order to work with classes of structures satisfying additional axioms (such as
metric spaces or triangle free graphs), it is usual to first apply the unrestricted theorems
(Theorems~\ref{thm:unrestrictedEPPA} and~\ref{thm:unrestrictedRamsey}) and then use
the resulting structures as a template to build bigger, and more sparse, structures with
the desired local properties.  
This is a nature of several earlier proofs of EPPA and the
Ramsey property~\cite{Nevsetvril2007,Dellamonica2012,herwig2000,hodkinson2003,Conant2015} and can be more systematically captured by the following
definition and theorems:
\begin{definition}[Tree amalgamation {\cite[Definition 7.1]{Hubicka2018EPPA}}]\label{defn:tree-amalgamation}
Let $\GammaL$ be a language equipped with a permutation group and let $\str A$ be a finite irreducible $\GammaL$-structure (Definition~\ref{def:irreducible}). We inductively define what a \emph{tree amalgamation of copies of $\str A$} is.
\begin{enumerate}
\item If $\str D$ is isomorphic to $\str A$ then $\str D$ is a tree amalgamation of copies of $\str A$.
\item If $\str B_1$ and $\str B_2$ are tree amalgamations of copies of $\str A$ and $\str D$ is a $\GammaL$-structure with an embedding to all of $\str A$, $\str B_1$ and $\str B_2$, then the free amalgamation of $\str B_1$ and $\str B_2$ over $\str D$ is also a tree amalgamation of copies of $\str A$.
\end{enumerate}
\end{definition} 
Recall the definition of homomorphism-embedding (Definition~\ref{def:homomorphism-embedding}).
In these terms, the sparsening constructions can be stated as follows.
\begin{theorem}[Hubi\v cka, Kone\v cn\' y, Ne\v set\v ril 2019]
\label{thm:sparseningEPPA}
Let $\GammaL$ be a finite language equipped with a permutation group where all function symbols are unary, $n\geq 1$, $\str{A}$ a finite irreducible $\GammaL$-structure and $\str{B}_0$ its finite EPPA-witness.
Then there exists a finite EPPA-witness $\str{B}$ for $\str A$ such that
\begin{enumerate}
 \item there is a homomorphism-embedding (a projection) $\str{B}\to \str{B}_0$,
 \item for every substructure $\str{B}'$ of $\str{B}$ with at most $n$ vertices there exists
a structure $\str{T}$ which is a tree amalgamation of copies of $\str{A}$ and a homomorphism-embedding $\str{B'}\to\str{T}$,
 \item $\str{B}$ is irreducible structure faithful.
\end{enumerate}
If $\str{B}_0$ is coherent, then $\str{B}$ is coherent, too.
\end{theorem}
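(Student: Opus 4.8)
The plan is to obtain $\str B$ by a \emph{valuation construction} in the spirit of the proof of Theorem~\ref{thm:unrestrictedEPPA} and of Hodkinson and Otto~\cite{hodkinson2003}: keep $\str B_0$ as a combinatorial skeleton, replace each of its vertices by a finite family of copies indexed by a sufficiently generic ``template'', and place copies of $\str A$ only where the template is consistent. The genericity will force every bounded piece of $\str B$ to unfold into a tree amalgamation, while enough symmetry of the template survives to extend partial automorphisms coherently.

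Concretely, I would set $\Gamma=\Aut(\str B_0)$, which is finite, and let $\mathcal C=\{g(\str A):g\in\Gamma\}$ be the $\Gamma$-orbit of the fixed copy $\str A\subseteq\str B_0$ — a finite family of copies of $\str A$ in $\str B_0$ (using this orbit rather than all copies of $\str A$ will be what later yields item~(3)). Fix a finite valuation template — for definiteness, a large finite abelian group $H$ with an independent coordinate for each copy — whose size is chosen large in terms of $n$, $|B_0|$ and $|\mathcal C|$; this is where quantitative genericity enters. Take the vertex set of $\str B$ to be the pairs $(v,\chi)$ where $v\in B_0$ and $\chi$ records, for each $\str C\in\mathcal C$ with $v\in C$, a template value; this set is finite. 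On a tuple of vertices of $\str B$ that projects coordinatewise onto the vertices of some $\str C\in\mathcal C$, in the order given by an embedding $\str A\to\str B_0$ with image $\str C$, and whose template values agree along $\str C$, place exactly the relations and function values that this embedding transports from $\str A$; let $\str B$ carry no other relations and empty function values otherwise. Then the projection $\pi\colon(v,\chi)\mapsto v$ is a homomorphism $\str B\to\str B_0$, and since an irreducible substructure cannot straddle the free amalgamation of distinct copies of $\str A$, every irreducible substructure of $\str B$ lies inside a single ``active'' copy of $\str A$, on which $\pi$ is an embedding onto a copy of $\str A$ in $\str B_0$; hence $\pi$ is a homomorphism-embedding and item~(1) holds. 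Also $\str A$ embeds in $\str B$ as the active copy with a constant template.

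For item~(2), I would fix a substructure $\str B'$ of $\str B$ on at most $n$ vertices; by genericity of the template (a counting bound over the few possible ``cyclic'' overlap patterns of copies from $\mathcal C$ on $n$ projected vertices shows none occurs once the template is large enough), the active copies met by $\str B'$ have an acyclic incidence pattern, so traversing this forest and free-amalgamating the corresponding copies of $\str A$ one at a time produces a tree amalgamation $\str T$ (Definition~\ref{defn:tree-amalgamation}) together with the required homomorphism-embedding $\str B'\to\str T$. For item~(3), any irreducible substructure $\str C$ of $\str B$ lies in an active copy whose projection is $g(\str A)$ for some $g\in\Gamma$; the lift $(v,\chi)\mapsto(g^{-1}(v),\chi\circ g)$ of $g^{-1}$ is an automorphism of $\str B$ carrying that active copy onto $\str A$ at some template level, and a further deck transformation shifting template values to the constant one moves it onto the embedded $\str A$, so $\str C$ is moved into $A$. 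Finally, given a partial automorphism $p$ of $\str A$ I would take an extension $\hat p\in\Gamma$ (using that $\str B_0$ is an EPPA-witness); its lift $(v,\chi)\mapsto(\hat p(v),\chi\circ\hat p^{-1})$ is an automorphism of $\str B$ extending $p$, the constant template being fixed. Since lifting is a group homomorphism $\Gamma\to\Aut(\str B)$, the map $p\mapsto(\text{lift of }\hat p)$ is coherent whenever $p\mapsto\hat p$ is, so coherence of $\str B_0$ transfers to $\str B$.

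The main obstacle will be the genericity step underpinning items~(1)--(3): one must (a) define the valuation template precisely and quantify how large it must be so that, for every set of at most $n$ vertices of $\str B$, the copies of $\str A$ active on it overlap only in a forest-like fashion, and (b) turn such a forest of copies into an honest tree amalgamation in the sense of Definition~\ref{defn:tree-amalgamation}, allowing for adjacent copies to share a whole substructure of $\str A$ rather than a single vertex. Balancing ``generic enough to kill all short cycles among copies of $\str A$'' against ``symmetric enough that $\Gamma$ lifts and deck transformations act'' is exactly what forces the orbit-based choice of $\mathcal C$ and the group-like template; once that balance is struck, the rest is bookkeeping on top of Theorem~\ref{thm:unrestrictedEPPA}, Definition~\ref{defn:tree-amalgamation}, and the definitions of homomorphism-embedding and coherent maps.
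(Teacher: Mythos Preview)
Your single-step construction does not achieve item~(2), and the gap is precisely in the ``genericity'' step you flag as the main obstacle. With the consistency condition you use --- an active copy of $\str C\in\mathcal C$ is a set of vertices projecting to $C$ and agreeing on the coordinate indexed by $C$ --- any cycle of copies in $\str B_0$ lifts to $\str B$ unchanged, regardless of how large $H$ is. If $C_1,\dots,C_k\in\mathcal C$ form a cycle (each $C_i\cap C_{i+1}\neq\emptyset$), pick any $h_1,\dots,h_k\in H$ and lift each $v\in\bigcup C_i$ to $(v,\chi_v)$ with $\chi_v(C_i)=h_i$ whenever $v\in C_i$; the constraints live on independent coordinates, so they are simultaneously satisfiable and the lifted copies form the same cycle in $\str B$. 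No counting bound on $|H|$ helps: you are not sampling, the adversary picks the $n$-vertex substructure. For a concrete failure take $\str A$ a single edge and $\str B_0$ any EPPA-witness containing a triangle in the edge-orbit; the triangle lifts to $\str B$, and a triangle has no homomorphism-embedding into any tree amalgamation of edges (a forest), so item~(2) fails already for $n=3$. The same example shows your claim that ``every irreducible substructure of $\str B$ lies inside a single active copy of $\str A$'' is false --- the lifted triangle is irreducible but spans three active edges --- so your argument for item~(1) also needs repair.

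What the paper actually does is \emph{iterate} the valuation construction $n$ times: one takes $\str B_0$, applies one valuation step to get $\str B_1$, then applies a valuation step to $\str B_1$ (now with copies of $\str A$ taken in $\str B_1$, not in $\str B_0$) to get $\str B_2$, and so on up to $\str B_n=\str B$. Each step genuinely unfolds one level of cycles, so after $n$ rounds every substructure on at most $n$ vertices sees only a forest of copies; the projection and coherence are preserved because each step is. The point is that at step $i+1$ the index set for the valuation is the set of copies in $\str B_i$, not in $\str B_0$; this is why a single step with a larger group over the fixed index set $\mathcal C\subseteq\str B_0$ is not equivalent to the iteration.
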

\begin{theorem}[Hubi\v cka, Ne\v set\v ril 2019]
\label{thm:sparseningRamsey}
Let $L$ be a language, $n\geq 1$, $\str{A},\str{B}$ finite irreducible $L$-structures and $\str{C}_0$ a finite $L$-structure such that $\str{C}_0\longrightarrow(\str{B})^\str{A}_2$.
Then there exists a finite $L$-structure $\str{C}$ such that $\str{C}\longrightarrow (\str{B})^\str{A}_2$ and
\begin{enumerate}
 \item there exists a homomorphism-embedding $\str{C}\to \str{C}_0$,
 \item for every substructure $\str{C}'$ of $\str{C}$ with at most $n$ vertices there exists
a structure $\str{T}$ which is tree amalgamation of copies of $\str{A}$ and a homomorphism-embedding $\str{C}'\to\str{T}$,
 \item every irreducible substructure of $\str{C}$ is also a substructure of a copy of $\str{B}$ in $\str{C}$.
\end{enumerate}
\end{theorem}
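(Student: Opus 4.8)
The plan is to prove this by a recursive version of the Ne\v set\v ril--R\"odl \emph{partite construction}, with the recursion driven by the parameter $n$ and the three extra conclusions carried along as invariants. First I would fix an enumeration $\{1,\dots,a\}$ of the vertex set of $\str{A}$, where $a=|A|$, and call a finite $L$-structure $\str{P}$ \emph{$\str{A}$-partite} if its vertex set is partitioned into parts $X_1,\dots,X_a$ so that the collapsing map $\pi$ sending $X_i$ to the vertex $i$ is a homomorphism-embedding $\str{P}\to\str{A}$; the \emph{transversal} copies of $\str{A}$ in $\str{P}$ are the embeddings $\str{A}\to\str{P}$ meeting each $X_i$ in exactly one vertex, in order. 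Since $\str{A}$ is irreducible (Definition~\ref{def:irreducible}), every substructure of $\str{P}$ isomorphic to $\str{A}$ on which $\pi$ is injective is transversal, so when colouring it is enough to control transversal copies.

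The first real ingredient is the \emph{Partite Lemma}: for every $\str{A}$-partite structure $\str{P}$ there is an $\str{A}$-partite structure $\str{Q}$ with $\str{Q}\longrightarrow(\str{P})^{\str{A}}_2$ on transversal copies, obtained from finitely many disjoint copies of $\str{P}$ by a sequence of free amalgamations over subsets of their parts. I would prove it by applying the Hales--Jewett theorem to the finite alphabet of transversal copies of $\str{A}$ in $\str{P}$: realise $\str{Q}$ as a substructure of an $N$-th ``amalgamated power'' of $\str{P}$, with $N$ large enough that every $2$-colouring of the combinatorial $N$-cube contains a monochromatic combinatorial line, and observe that a monochromatic line is exactly a monochromatic partite copy of $\str{P}$ inside $\str{Q}$. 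The point of the free-amalgamation description is the elementary observation that an irreducible substructure of a free amalgam lies entirely on one side; hence, inductively, every irreducible substructure of $\str{Q}$ lies inside a single copy of $\str{P}$, so $\str{Q}$ inherits from $\str{P}$ every restriction that is phrased in terms of irreducible substructures.

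Next I would run the iteration. Build the \emph{partite picture} $\str{P}^0$ out of the copies of $\str{B}$ in $\str{C}_0$: a structure assembled from lifted partite copies of those copies of $\str{B}$, together with a homomorphism-embedding $\pi_0\colon\str{P}^0\to\str{C}_0$, so that every copy of $\str{B}$ in $\str{C}_0$ has a partite lift in $\str{P}^0$ and every irreducible substructure of $\str{P}^0$ already lies inside such a lift. Enumerate the copies of $\str{B}$ in $\str{C}_0$ as $\beta_1,\dots,\beta_M$. Given $\str{P}^{j-1}$ and $\pi_{j-1}$, take the partite copy $\str{B}^j$ of $\str{B}$ in $\str{P}^{j-1}$ lying over $\beta_j$, apply the Partite Lemma to get an $\str{A}$-partite $\str{Q}_j\longrightarrow(\str{B}^j)^{\str{A}}_2$, and let $\str{P}^j$ be obtained by placing a copy of $\str{P}^{j-1}$ over each copy of $\str{B}^j$ inside $\str{Q}_j$ and freely amalgamating those copies of $\str{P}^{j-1}$ along their common parts; the projection $\pi_j$ is induced. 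The signature backward induction of the partite construction -- colour the transversal copies of $\str{A}$ in $\str{P}^M$, pass successively to an embedded copy of $\str{P}^{M-1}$, then of $\str{P}^{M-2}$, and so on, each pass using the Partite Lemma to pin down one $\beta_j$ -- then yields $\str{P}^M\longrightarrow(\str{B})^{\str{A}}_2$. Setting $\str{C}=\str{P}^M$, the map $\pi_M$ gives conclusion (1); and since every step only freely amalgamates copies of $\str{P}^{j-1}$ and the Partite Lemma introduces no irreducible substructure outside a copy of $\str{B}^j$, the ``irreducible-on-one-side'' observation gives, by induction, that every irreducible substructure of $\str{C}$ sits inside a copy of $\str{B}$, which is conclusion (3).

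The hard part will be conclusion (2) for \emph{arbitrary} $n$. One pass of the construction does not give it: the partite picture and the glueing patterns coming from Hales--Jewett need not be tree-like once one looks at more than a bounded number of vertices, so a substructure of $\str{C}$ on $n$ vertices need not homomorphism-embed into any tree amalgamation of copies of $\str{A}$ (Definition~\ref{defn:tree-amalgamation}). The remedy -- and where essentially all the technical weight sits -- is to make the whole construction recursive in $n$: iterate the partite step, arranging at each round that the structure handed to the Partite Lemma and the Ramsey object it returns are themselves ``$n$-locally'' tree amalgamations of copies of $\str{A}$, so that after finitely many rounds every substructure of the resulting structure on at most $n$ vertices is spread thinly enough across the amalgamation tree to admit a homomorphism-embedding into an honest tree amalgamation of copies of $\str{A}$. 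One then has to check that this flattening terminates and that at every round the Ramsey property together with conclusions (1) and (3) survive; this bookkeeping, run on top of the machinery already developed for the unrestricted theorem (Theorem~\ref{thm:unrestrictedRamsey}), is the core of the argument.
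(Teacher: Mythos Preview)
Your proposal is correct and follows essentially the same approach as the paper: the iterated partite construction of Hubi\v cka and Ne\v set\v ril, with conclusions (1) and (3) carried as invariants of the free-amalgamation structure and conclusion (2) obtained by repeating the full partite construction $n$ times, each round taking the previous output as the new template $\str{C}_0$. The paper states the recursion a touch more concretely---each complete pass increases by one the size of substructures guaranteed to homomorphism-embed into a tree amalgamation of copies of $\str{A}$---but your outline matches.
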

While not stated in this form, Theorem~\ref{thm:sparseningEPPA} follows by a proof of Lemma 2.8 in~\cite{Hubicka2018EPPA} and Theorem~\ref{thm:sparseningRamsey} is a direct consequence of the iterated partite construction as used
in the proof of Lemmas 2.30 and 2.31~in~\cite{Hubicka2016}.
While the underlying combinatorics for EPPA and Ramsey constructions are very different, the overall structure of the proofs is the same.
To prove Theorem~\ref{thm:sparseningEPPA}, one repeats the valuation construction $n$ times, each time taking the result of the previous step as a template to build a new structure. Analogously, Theorem~\ref{thm:sparseningRamsey} is proved by repeating the partite construction $n$ times with a similar setup.

\subsection{Structural conditions on amalgamation classes}
Theorems~\ref{thm:sparseningEPPA} and~\ref{thm:sparseningRamsey} can be used as
``black boxes'' to show EPPA and the Ramsey property for almost all known
examples. However, to make their application easier, it is useful to introduce
some additional notions.

Recall that by Observations~\ref{obs:eppaamalgamation} and~\ref{obs:eppaamalgamation},
 all sensible candidates (that is, hereditary isomorphism-closed classes of finite structures with
the joint embedding property) for EPPA and the Ramsey property are amalgamation classes.
Since neither EPPA or the Ramsey property is implied by the amalgamation property in full generality (as can be demonstrated by counter-examples),
the aim of this section is to give structural conditions which are sufficient to
prove EPPA or the Ramsey property for a given amalgamation class $\mathcal K$.

\medskip

We will make a technical assumption that all structures in the class $\K$ we are working with are irreducible.
This will be helpful in formulating the conditions dealing with ``structures with holes''.
Irreducibility can be easily accomplished for any amalgamation class $\K$ by 
considering its expansion adding a
binary symbol $R$ and putting for every $(u,v)\in \rel{A}{}$
for every $\str{A}\in \K$ and every $u,v\in A$.

\medskip
The following concepts were introduced by Hubi\v cka and Ne\v set\v ril in the
Ramsey context~\cite{Hubicka2016} and subsequently adjusted for EPPA by Hubi\v cka,
Kone\v cn\' y and Ne\v set\v ril~\cite{Hubicka2018EPPA}.  We combine both
approaches.
\begin{figure}
\centering
\includegraphics{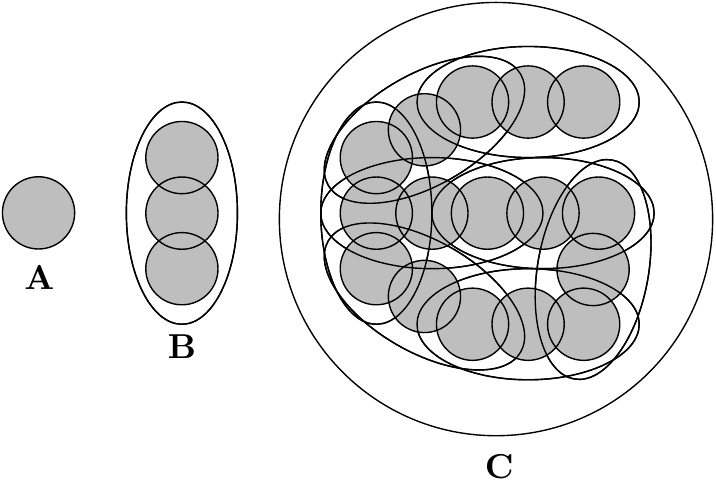}
\caption{Construction of a Ramsey object by multiamalgamation.}
\label{fig:multiamalgam}
\end{figure}
\begin{figure}
\centering
\includegraphics{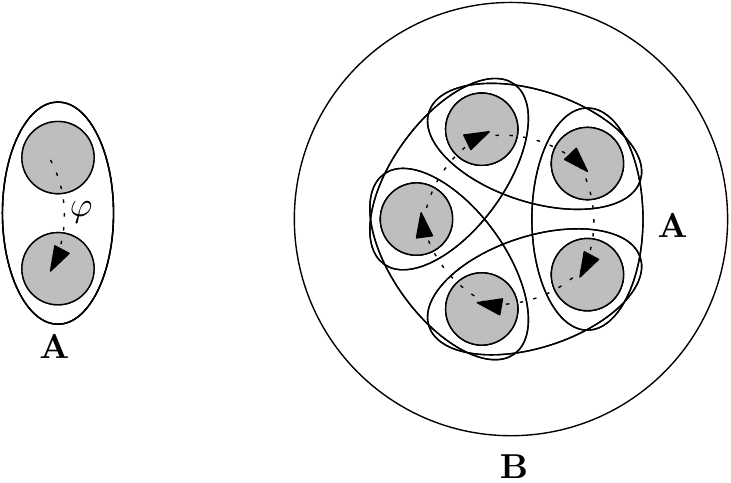}
\caption{Extending one partial automorphism $\varphi$.}
\label{fig:extending}
\end{figure}

At an intuitive level, it is not hard to see that
both Ramsey constructions and EPPA constructions can be seen as series of amalgamations all performed
at once as schematically depicted in Figures~\ref{fig:multiamalgam} and~\ref{fig:extending}.
These amalgamations must ``close cycles'' and can not be just tree amalgamations in the
sense of Definition~\ref{defn:tree-amalgamation}.

 Instead of working with complicated amalgamation diagrams, we  split the process into two steps---the \emph{construction} of the free multiamalgamation
 (which yields an incomplete, or ``partial'', structure) followed by a \emph{completion}
(cf. Bitterlich and Otto~\cite{bitterlich2019investigations}).

\begin{definition}[Completion {\cite[Definition 2.5]{Hubicka2016}, \cite[Definition 8.2]{Hubicka2018EPPA}}]
\label{defn:completion}
Let $\str{C}$ be a $\GammaL$-structure. An irreducible $\GammaL$-structure $\str{C}'$ is a \emph{completion}
of $\str{C}$ if there exists a homomorphism-embedding $\str{C}\to\str{C}'$.
If there is a homomorphism-embedding $\str{C}\to\str{C}'$ which is injective,
we call $\str{C}'$ a \emph{strong completion}.

We also say that a strong completion is \emph{automorphism-preserving} if for every $\alpha\in \Aut(\str C)$ there is $\alpha'\in \Aut(\str{C}')$ such that $\alpha\subseteq \alpha'$ and moreover the map $\alpha\mapsto\alpha'$ is a group homomorphism $\Aut(\str C)\to \Aut(\str C')$.

Of particular interest will be the question whether there exists a completion in a given class $\mathcal K$ of structures. In this case we speak about a \emph{$\mathcal K$-completion}.
\end{definition}

For classes of irreducible structures, (strong) completion may be seen as a generalisation of (strong) amalgamation: Let $\K$ be such a class. The (strong)
amalgamation property of $\K$ can be equivalently formulated as follows: For $\str{A}$, $\str{B}_1$, $\str{B}_2 \in \K$ and embeddings $\alpha_1\colon\str{A}\to\str{B}_1$ and $\alpha_2\colon\str{A}\to\str{B}_2$, there is a (strong) $\mathcal K$-completion
of the free amalgamation of $\str{B}_1$ and $\str{B}_2$ over $\str{A}$ with respect to $\alpha_1$ and $\alpha_2$.

Observe that the free amalgamation is not in $\K$ unless the situation is trivial.
Free amalgamation results in a reducible structure, as the pairs of vertices  where
one vertex belongs to $\str{B}_1\setminus \alpha_1(\str{A})$ and the other to $\str{B}_2\setminus \alpha_2(\str{A})$ are never both contained in a single tuple
of any relation. Such pairs can be thought of as  \emph{holes} and a completion is then a process of filling in the
holes to obtain irreducible structures while preserving all embeddings of irreducible structures.

\medskip

The key structural condition can now be formulated as follows:
\begin{definition}[Locally finite subclass {\cite[Definition 8.3]{Hubicka2018EPPA}, \cite[Definition 2.8]{Hubicka2016}}]\label{defn:locallyfinite}
Let $\mathcal E$ be a class of finite $\GammaL$-structures and $\mathcal K$ a subclass of $\mathcal E$ consisting of irreducible structures. We say
that the class $\mathcal K$ is a \emph{locally finite subclass of $\mathcal E$} if for every $\str A\in \mathcal K$ and every $\str{B}_0 \in \mathcal E$ there is a finite integer $n = n(\str A, \str {B}_0)$ such that 
every $\GammaL$-structure $\str B$ has a completion $\str B'\in \mathcal K$ provided that it satisfies the following:
\begin{enumerate}
\item every irreducible substructure of $\str{B}$ lies in a copy of $\str A$,
\item there is a homomorphism-embedding from $\str{B}$ to $\str{B}_0$, and,
\item every substructure of $\str{B}$ on at most $n$ vertices has a comple\-tion in $\mathcal K$.
\end{enumerate}
We say that $\mathcal K$ is a \emph{locally finite automorphism-preserving subclass of $\mathcal E$} if $\str B'$ can always be chosen to be strong and automorphism-preserving.
\end{definition}

The following results are our main tools for obtaining EPPA and Ramsey results.
(And are, in fact, corollaries of Theorems~\ref{thm:sparseningEPPA} and \ref{thm:sparseningRamsey}.)

\begin{theorem}[Hubi\v cka, Kone\v cn\' y, Ne\v set\v ril~\cite{Hubicka2018EPPA}]
\label{thm:mainstrongEPPA}
Let $\GammaL$ be a finite language equipped with a permutation group where all function symbols are unary (no restrictions are given on the relational symbols) and
let $\mathcal E$ be a class of finite irreducible $\GammaL$-structures with EPPA. Let $\K$ be a hereditary
locally finite automorphism-preserving subclass of $\mathcal E$ with the strong amalgamation property. Then $\K$ has EPPA.

  Moreover, if EPPA-witnesses in $\mathcal E$ can be chosen to be coherent then EPPA-witnesses in $\K$ can be chosen to be coherent, too.
\end{theorem}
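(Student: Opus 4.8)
The plan is to derive Theorem~\ref{thm:mainstrongEPPA} as a corollary of the sparsening theorem (Theorem~\ref{thm:sparseningEPPA}), using the locally finite automorphism-preserving subclass hypothesis to convert the "partial" structures produced by sparsening into genuine members of $\K$. First I would fix $\str{A}\in\K$ and aim to build an EPPA-witness for it inside $\K$. Since $\K\subseteq\mathcal E$ and $\mathcal E$ has EPPA, there is $\str{B}_0\in\mathcal E$ which is an EPPA-witness for $\str{A}$; we may assume $\str{B}_0$ is irreducible (it lies in $\mathcal E$, a class of irreducible structures). Now apply Theorem~\ref{thm:sparseningEPPA} with this $\str{A}$ and $\str{B}_0$ and with $n=n(\str{A},\str{B}_0)$ the integer supplied by the definition of locally finite automorphism-preserving subclass. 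This yields a finite EPPA-witness $\str{B}$ for $\str{A}$ together with: a homomorphism-embedding $\str{B}\to\str{B}_0$; the property that every substructure of $\str{B}$ on at most $n$ vertices has a homomorphism-embedding into a tree amalgamation $\str{T}$ of copies of $\str{A}$; and irreducible structure faithfulness of $\str{B}$.

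The next step is to check that $\str{B}$ (or rather, a strong automorphism-preserving completion of it) satisfies the three hypotheses in Definition~\ref{defn:locallyfinite}, so that we obtain a completion $\str{B}'\in\K$. For hypothesis (1): every irreducible substructure of $\str{B}$ lies in a copy of $\str{A}$---this follows from irreducible structure faithfulness together with the fact that $\str A$ generates $\mathcal E$-structures appropriately; more precisely, by faithfulness every irreducible substructure $\str C$ of $\str B$ can be moved by an automorphism of $\str B$ inside $A$, and since $\str A$ is irreducible and substructures of $\str A$ are in $\K$, one argues $\str C$ embeds into a copy of $\str A$. For hypothesis (2): the projection $\str{B}\to\str{B}_0$ is exactly the required homomorphism-embedding. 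For hypothesis (3): any substructure of $\str{B}$ on at most $n$ vertices admits a homomorphism-embedding into a tree amalgamation $\str{T}$ of copies of $\str{A}$; since $\K$ is a strong amalgamation class of irreducible structures, tree amalgamations of copies of $\str A$ have $\K$-completions (one builds the completion inductively along the tree-amalgamation structure using strong amalgamation at each free-amalgamation step, filling the holes), hence so does the small substructure. Therefore Definition~\ref{defn:locallyfinite} applies and gives a strong automorphism-preserving completion $\str{B}'\in\K$ of $\str{B}$.

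It remains to verify that $\str{B}'$ is an EPPA-witness for $\str{A}$ in $\K$. Because the completion $\str{B}\to\str{B}'$ is injective (strong) and a homomorphism-embedding, and because $\str A$ is irreducible, the composite $\str A\to\str B\to\str B'$ is an embedding, so $\str{A}$ is a substructure of $\str{B}'$. Given a partial automorphism $\varphi$ of $\str{A}$, it extends to $\hat\varphi\in\Aut(\str{B})$ since $\str{B}$ is an EPPA-witness; by the automorphism-preserving property of the completion there is $\hat\varphi'\in\Aut(\str{B}')$ with $\hat\varphi\subseteq\hat\varphi'$, hence $\hat\varphi'$ extends $\varphi$. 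This proves $\K$ has EPPA. For the coherence addendum: if EPPA-witnesses in $\mathcal E$ are coherent, then $\str{B}_0$ is coherent, so by the last clause of Theorem~\ref{thm:sparseningEPPA} the witness $\str{B}$ is coherent; and since the map $\hat\varphi\mapsto\hat\varphi'$ given by the automorphism-preserving completion is a group homomorphism $\Aut(\str{B})\to\Aut(\str{B}')$, composing the two coherent maps (partial automorphisms of $\str A$ to $\Aut(\str B)$, then $\Aut(\str B)\to\Aut(\str B')$) yields a coherent map, so $\str{B}'$ is a coherent EPPA-witness.

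The main obstacle I expect is the careful handling of hypothesis (1) and the passage through irreducible structure faithfulness: one needs to know not merely that small pieces of $\str B$ map into tree amalgamations, but that \emph{every} irreducible substructure of $\str B$ sits inside an honest copy of $\str A$, which is what licenses treating $\str A$ as "the" building block in Definition~\ref{defn:locallyfinite}(1). A secondary technical point is verifying that tree amalgamations of copies of $\str{A}$ indeed have $\K$-completions using only the strong amalgamation property of $\K$---this is the inductive "fill the holes along the tree" argument and must be done uniformly so that it feeds correctly into clause (3). Both points are essentially bookkeeping given Theorems~\ref{thm:sparseningEPPA} and the definitions, but they are where the proof has to be written with care.
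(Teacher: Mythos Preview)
Your proposal is correct and follows exactly the route the paper indicates: the paper states just before Theorem~\ref{thm:mainstrongEPPA} that it is a corollary of Theorem~\ref{thm:sparseningEPPA}, and your argument is precisely the intended derivation (apply sparsening over an $\mathcal E$-witness $\str B_0$ with $n=n(\str A,\str B_0)$, verify the three clauses of Definition~\ref{defn:locallyfinite} for the resulting $\str B$, then take the strong automorphism-preserving completion). One small simplification: hypothesis~(1) is immediate from irreducible structure faithfulness and needs no extra care---if $g(C)\subseteq A$ then $C\subseteq g^{-1}(A)$, which is a copy of $\str A$ in $\str B$.
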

\begin{theorem}[Hubi\v cka, Ne\v set\v ril~\cite{Hubicka2016}]
\label{thm:mainstrong}
Let $L$ be a language, let $\mathcal R$ be a Ramsey class of irreducible finite $L$-structures and let $\K$ be a hereditary
locally finite subclass of $\mathcal R$ with the strong amalgamation property.
Then $\K$ is a Ramsey class.

Explicitly:
For every pair of struc\-tures $\str{A}, \str{B}\in\K$ there exists
a structure $\str{C} \in \K$  such that
$$
\str{C} \longrightarrow (\str{B})^{\str{A}}_2.
$$
\end{theorem}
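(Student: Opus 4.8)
The plan is to deduce Theorem~\ref{thm:mainstrong} from the sparsening Theorem~\ref{thm:sparseningRamsey}, using the locally finite subclass hypothesis to perform a single completion step at the end. First I would fix $\str A,\str B\in\K$ and reduce to the case where $\str A$ and $\str B$ are irreducible: if $\K$ consists of irreducible structures (which we may assume after the technical expansion adding the binary symbol $R$ described before Definition~\ref{defn:completion}), both $\str A$ and $\str B$ are irreducible, and $\str B$ witnesses the expansion property trivially. Since $\mathcal R\supseteq\K$ is a Ramsey class of irreducible $L$-structures, there is $\str C_0\in\mathcal R$ with $\str C_0\longrightarrow(\str B)^{\str A}_2$. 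Now let $\str{B}_0 := \str C_0$ play the role of the ambient template structure in Definition~\ref{defn:locallyfinite}, and let $n = n(\str A,\str C_0)$ be the integer supplied by the hypothesis that $\K$ is a locally finite subclass of $\mathcal R$.

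Next I would apply Theorem~\ref{thm:sparseningRamsey} with this $n$, with $\str A,\str B$ as given and $\str C_0$ as above, obtaining a finite $L$-structure $\str C^\star$ with $\str C^\star\longrightarrow(\str B)^{\str A}_2$ together with: (1) a homomorphism-embedding $\str C^\star\to\str C_0$; (2) for every substructure of $\str C^\star$ on at most $n$ vertices, a homomorphism-embedding into some tree amalgamation $\str T$ of copies of $\str A$; and (3) every irreducible substructure of $\str C^\star$ sits inside a copy of $\str B$ in $\str C^\star$. The structure $\str C^\star$ is in general \emph{not} in $\K$ — it is a ``partial'' structure with holes — so the remaining task is to complete it to a structure in $\K$ while keeping the partition property. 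For this I would verify the three conditions of Definition~\ref{defn:locallyfinite} for $\str B := \str C^\star$: condition~(1) of that definition is exactly item~(3) above (every irreducible substructure lies in a copy of $\str A$, since it lies in a copy of $\str B$ and $\K$ is hereditary, and one further notes copies of $\str B$ contain copies of $\str A$); condition~(2) is item~(1) above with $\str B_0=\str C_0$; and condition~(3) requires that every substructure of $\str C^\star$ on at most $n$ vertices has a $\K$-completion. For the last point I would use item~(2): such a substructure maps by a homomorphism-embedding into a tree amalgamation $\str T$ of copies of $\str A$, and tree amalgamations of copies of $\str A\in\K$ lie in $\K$ (or have $\K$-completions) because $\K$ has the strong amalgamation property and free amalgamation over a common substructure, iterated finitely, stays inside an amalgamation class after completion; composing gives the required completion. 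Hence Definition~\ref{defn:locallyfinite} yields a completion $\str C\in\K$ of $\str C^\star$, i.e.\ a homomorphism-embedding $g\colon\str C^\star\to\str C$.

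Finally I would check that $\str C\longrightarrow(\str B)^{\str A}_2$. Given a $2$-colouring of $\binom{\str C}{\str A}$, pull it back along $g$ to a colouring of $\binom{\str C^\star}{\str A}$ — here one uses that $g$ is a homomorphism-embedding, so it maps copies of the irreducible structure $\str A$ in $\str C^\star$ to copies of $\str A$ in $\str C$, and likewise copies of the irreducible $\str B$. Since $\str C^\star\longrightarrow(\str B)^{\str A}_2$ there is a copy $\widetilde{\str B}$ of $\str B$ in $\str C^\star$ all of whose $\str A$-subcopies receive one colour; then $g(\widetilde{\str B})$ is a copy of $\str B$ in $\str C$ with the property that every copy of $\str A$ inside it is the $g$-image of a copy of $\str A$ inside $\widetilde{\str B}$ (using that $g\restriction \widetilde{\str B}$ is an embedding of the irreducible $\str B$, hence bijective onto its image as structures), so all these copies of $\str A$ are monochromatic. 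This proves the partition relation for $\str C\in\K$.

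\textbf{Main obstacle.} The delicate point is verifying condition~(3) of Definition~\ref{defn:locallyfinite} for $\str C^\star$, that is, arguing that small substructures of $\str C^\star$ — which via Theorem~\ref{thm:sparseningRamsey}(2) embed homomorphically into tree amalgamations of copies of $\str A$ — genuinely admit $\K$-completions. One must be careful that a homomorphism-embedding into a tree amalgamation does not immediately give membership in $\K$; rather, the tree amalgamation itself, being built by iterated free amalgamation over common substructures of copies of $\str A\in\K$, must be shown to have a $\K$-completion using the strong amalgamation property of $\K$ (free amalgamation followed by $\K$-completion at each step), and then this completion has to be shown to absorb the small substructure. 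Matching the parameter $n$ coming from the locally finite subclass hypothesis with the $n$ fed into the sparsening theorem, and ensuring the tree-amalgamation bound in item~(2) is compatible with the completion radius, is where the real content sits; everything else is bookkeeping about how homomorphism-embeddings interact with irreducible substructures and with the Ramsey partition relation.
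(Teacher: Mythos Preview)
Your overall strategy is exactly the one the paper intends: the survey states explicitly that Theorem~\ref{thm:mainstrong} is a corollary of the sparsening Theorem~\ref{thm:sparseningRamsey}, and your pull-back/push-forward argument for the partition relation via the completion homomorphism-embedding is correct. There is, however, a genuine slip in how you instantiate Definition~\ref{defn:locallyfinite}.

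You take $n=n(\str A,\str C_0)$ and then try to verify condition~(1) of Definition~\ref{defn:locallyfinite} with parameter $\str A$. But item~(3) of Theorem~\ref{thm:sparseningRamsey} only tells you that every irreducible substructure of $\str C^\star$ sits in a copy of $\str B$, not of $\str A$. Your patch (``copies of $\str B$ contain copies of $\str A$'') does not bridge this: an irreducible substructure of $\str B$ need not lie inside any copy of $\str A$ in $\str B$ (think of $\str A$ an edge and $\str B$ a triangle, both irreducible). The fix is simply to invoke the locally finite hypothesis with parameter $\str B$, setting $n=n(\str B,\str C_0)$; then condition~(1) is literally item~(3) of the sparsening theorem, condition~(2) is item~(1), and condition~(3) still follows from item~(2) since tree amalgamations of copies of $\str A\in\K$ have $\K$-completions just as well as tree amalgamations of copies of $\str B$ do.

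On your ``main obstacle'': your inductive plan is right, and the missing observation that makes it go through cleanly is that the gluing structure $\str D$ in Definition~\ref{defn:tree-amalgamation} embeds into $\str A\in\K$, so $\str D\in\K$ by heredity, and hence $\str D$ is \emph{irreducible} (because $\K\subseteq\mathcal R$ and $\mathcal R$ consists of irreducible structures). Consequently any strong completion $f_i\colon\str T_i\to\str T_i'$ restricts to an embedding on the image of $\str D$, and you can then apply strong amalgamation in $\K$ directly at each step to produce a strong $\K$-completion of the tree amalgamation. Without noting the irreducibility of $\str D$ you cannot guarantee that the completed pieces still agree over a common substructure in $\K$, which is precisely the worry you flagged.
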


For applications, it is important that in many cases the existence of $\K$-completions and strong $\K$-completions coincide. This can be formulated as follows.
\begin{prop}[{Hubi\v cka, Ne\v set\v ril~\cite[Proposition 2.6]{Hubicka2016}}]
\label{prop:strongcompletion}
Let $L$ be a language such that all function symbols in $L$ have arity one (there is no restriction on relational symbols) and let $\K$ be a hereditary class of finite irreducible $L$-structures with the strong amalgamation property.
A finite $L$-structure $\str{A}$ has a $\K$-completion if and only if it has a strong $\K$-completion.
\end{prop}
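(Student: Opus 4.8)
The plan is to dispose of the trivial ``only if'' implication --- a strong $\K$-completion is in particular a $\K$-completion --- and then, for the converse, to turn an arbitrary $\K$-completion into an injective one by an induction that destroys the identifications of the given map one fibre at a time. So I would assume $\str A$ has a $\K$-completion, witnessed by a homomorphism-embedding $f\colon\str A\to\str A'$ with $\str A'\in\K$, and induct on the number of unordered pairs of distinct vertices of $\str A$ that $f$ identifies; if this number is $0$, then $f$ is injective and, being a homomorphism-embedding into a member of $\K$, is already a strong $\K$-completion.

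Two structural facts drive the inductive step. First, for every vertex $x$ of $\str A$ the closure $\cl_{\str A}(x)$ is irreducible: were it a free amalgamation of proper substructures, $x$ would lie in one of them, which would then be a substructure of $\str A$ containing $x$, contradicting the minimality of $\cl_{\str A}(x)$. Hence $f$ carries $\cl_{\str A}(x)$ isomorphically onto $\cl_{\str A'}(f(x))$; every irreducible substructure of $\str A$, being isomorphic to a substructure of $\str A'$, lies in $\K$ by hereditarity; and no irreducible substructure of $\str A$ can contain two distinct vertices with the same $f$-image, since $f$ embeds it into $\str A'$. Second --- and this is the sole place where the hypothesis that all function symbols are unary is used --- the closure of a set of vertices is the union of the closures of its elements, which is exactly what makes the construction below go through.

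Now I would fix a vertex $v$ of $\str A'$ with $|f^{-1}(v)|\ge 2$ and a vertex $a\in f^{-1}(v)$, and let $\str D$ be the closure in $\str A'$ of the set $\{f(x):x\in A,\ v\notin\cl_{\str A'}(f(x))\}$; it is a substructure of $\str A'$, hence $\str D\in\K$, and by the second fact $v\notin D$. Let $\str A''\in\K$ be a strong amalgamation --- available by hypothesis --- of two copies $\str A'_1,\str A'_2$ of $\str A'$ over $\str D$, so that in $\str A''$ the vertex $v$ acquires two distinct copies $v_1\in\str A'_1$ and $v_2\in\str A'_2$. Define $g\colon\str A\to\str A''$ by: if $\cl_{\str A}(x)$ contains $a$, send $x$ via $f$ into $\str A'_1$; if $\cl_{\str A}(x)$ contains some vertex of $f^{-1}(v)\setminus\{a\}$, send $x$ via $f$ into $\str A'_2$; otherwise $f(x)\in D$ and let $g(x)=f(x)$. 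This is well defined because, $\cl_{\str A}(x)$ being irreducible, it cannot contain both $a$ and a distinct vertex of $f^{-1}(v)$. On an irreducible substructure $\str C$ of $\str A$ --- which contains at most one vertex of $f^{-1}(v)$ and, being closed, contains $\cl_{\str A}(x)$ for each $x\in C$ --- every vertex is routed into a single one of $\str A'_1,\str A'_2,\str D$, so $g\restriction \str C$ equals $f\restriction \str C$ followed by an embedding; hence $g$ is again a homomorphism-embedding into a member of $\K$. Finally $g(x)=g(y)$ forces $f(x)=f(y)$, whereas $g(a)=v_1\ne v_2=g(b)$ for every $b\in f^{-1}(v)\setminus\{a\}$, so $g$ identifies strictly fewer pairs than $f$, and the induction hypothesis applied to $g$ finishes the proof.

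The part I expect to be most delicate is exactly the verification sketched above that $g$ is a homomorphism-embedding --- i.e., that the routing prescription is consistent on every irreducible substructure of $\str A$ --- together with the claim $v\notin D$. Both rest on the arity-one assumption, through the fact that closures of such structures are computed element-wise; for function symbols of higher arity a single function value can entangle distinct fibres (so that $v$ may be forced to lie in $D$) and distinct pieces of the amalgam, and the construction --- indeed the statement --- genuinely breaks down.
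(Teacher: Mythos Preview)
The survey itself does not prove this proposition; it is merely quoted from \cite{Hubicka2016}, so there is no in-paper argument to compare against. Your proof is correct and is exactly the standard argument for this result: reduce to the injective case by splitting one fibre at a time via a strong amalgamation of two copies of the current target over the substructure that avoids the chosen fibre, using throughout that with unary functions closures are computed elementwise.

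One cosmetic point: in the inductive step you say that on an irreducible $\str C$ ``every vertex is routed into a single one of $\str A'_1,\str A'_2,\str D$''. Strictly speaking, vertices of $\str C$ may be split between, say, $\str A'_1$ and $\str D$ (those $x\in C$ with $a\in\cl_{\str A}(x)$ go to $\str A'_1$, the rest to $\str D$). What makes the argument work is that $\str D$ sits inside both copies in the amalgam, so in either case $g\restriction\str C$ factors as $f\restriction\str C$ followed by one of the two embeddings $\iota_i\colon\str A'\to\str A''$; it would be cleaner to phrase it that way. The remaining verifications (that $v\notin D$, that the routing is well defined because an irreducible substructure cannot contain two preimages of $v$, and that the number of identified pairs strictly drops) are all fine.
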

With this proposition at hand, verification of the condition given by Definition~\ref{defn:locallyfinite} can be carried out for many amalgamation classes.
Several examples aare worked out in~\cite[Section 4]{Hubicka2016} and followup papers (see, for example, \cite{Sam,Hubicka2017sauerconnant,Aranda2017,Aranda2017c,Konecny2018bc,Konecny2018b,Konecny2019a}).

Let us discuss two examples which demonstrate the techniques for verifying local finiteness and
also a simple situation where this condition does not hold.
\begin{example}[{Metric spaces with distances $1$, $2$, $3$ and $4$~\cite[Example 2.9]{Hubicka2016}}]
\label{example:metric}
Consider a language $L$ containing binary relations $\rel{}{1}$, $\rel{}{2}$, $\rel{}{3}$ and $\rel{}{4}$ which we understand as
distances. Let $\mathcal E$ be the class of all irreducible finite structures where all four relations are symmetric,
irreflexive and every pair of distinct vertices is in precisely one of relations $\rel{}{1}$, $\rel{}{2}$, $\rel{}{3}$, or $\rel{}{4}$ ($\mathcal E$ may be viewed a class of 4-edge-coloured complete graphs). Let $\mathcal K$ be a subclass of $\mathcal E$ consisting of those structures which satisfy
the triangle inequality (\ie{}\ $\mathcal K$ is the class of finite metric spaces with distances 1, 2, 3, and 4).

It is not hard to verify that an $L$-structure $\str{B}$ which has a completion to some $\str{B}_0\in  \mathcal E$ (meaning that all relations are
symmetric and irreflexive and every pair of distinct vertices is in at most one relation) can be
completed to a metric space if and only if it contains no non-metric triangles (\ie{}\ triangles with
distances 1--1--3, 1--1--4 or 1--2--4) and no 4-cycle with distances 1--1--1--4, see Figure~\ref{fig:4cycles}.
This can be done by computing the shortest distance among the edges present in the partial structure.
Such completion is also clearly automorphism-preserving.
\begin{figure}
\centering
\includegraphics{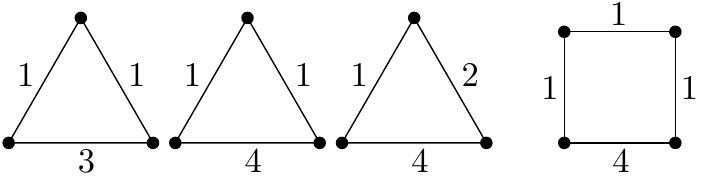}
\caption{Cycles with no completion to a metric space with distances $1,2,3$ and $4$.}
\label{fig:4cycles}
\end{figure}
 It follows that $\mathcal K$ is a locally
finite subclass of $\mathcal E$ and for every $\str{C}_0 \in \mathcal E$ we can put $n(\str{C}_0) = 4$.
\end{example}
\begin{example}[{Metric spaces with distances $1$ and $3$~\cite[Example 2.10]{Hubicka2016}}]
\label{example:13b}
Now consider the class $\mathcal K_{1,3}$ of all metric spaces which use only distances one and three. It is easy to see
that $\mathcal K_{1,3}$ is not a locally finite subclass of $\mathcal E$ (as given in Example~\ref{example:metric}). To see that let $\str T\in \mathcal E$ be the triangle with distances 1--1--3. Now consider a cycle $\str C_n$ of length $n$ with one edge of distance three and the others of distance one (as depicted in Figure~\ref{ultra}).
\begin{figure}
\centering
\includegraphics{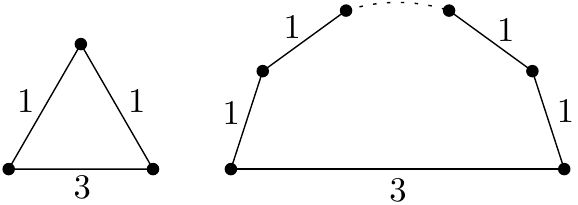}
\caption{Cycles with no completion to a metric spaces with distances one and three.}
\label{ultra}
\end{figure}
$\str{T}$ is a completion of $\str C_n$, however it has no $\mathcal K_{1,3}$-completion. Moreover, every proper substructure of $\str C_n$ (that is, a path consisting of
at most one edge of distance three and others of distance one) does have a $\mathcal K_{1,3}$-completion.
It follows that there is no $n(\str{T})$ and thus $\mathcal K_{1,3}$ is not a locally finite subclass of $\mathcal E$.
\end{example}
We remark that an equivalent of Proposition~\ref{prop:strongcompletion} does
not hold for languages with functions of greater arity. This can be demonstrated
on a homogenization of the class of all finite graphs of girth 5~\cite[Example 2.7]{Hubicka2016}.
\section{Key examples}
\label{sec:examples}
Thanks to the general constructions presented in Section~\ref{sec:general} and
systematic work on the classification programme, the landscape of known Ramsey
and EPPA classes has recently changed significantly. Instead of several
isolated examples, we nowadays know many classes which are too numerous to be
fully covered by this introduction. We however list those which we consider
most significant for developing the general theory.

\subsection{Free amalgamation classes}
The Ne\v set\v ril--R\"odl theorem~\cite{Nevsetvril1977} was the fist result which
gave the Ramsey property of a class of structures satisfying additional axioms.
Let us cite its 1989 formulation~\cite{Nevsetvril1989}.
\begin{theorem}[Ne\v set\v ril--R\"odl theorem for hypergraphs~\cite{Nevsetvril1989}]
\label{thm:NRoriginal}
Let $\str{A}$ and $\str{B}$ be ordered hypergraphs, then there exists an ordered hypergraph
$\str{C}$ such that $\str{C}\longrightarrow (\str{B})^\str{A}_2$.

Moreover, if $\str{A}$ and $\str{B}$ do not contain an irreducible hypergraph
$\str{F}$ (as a non-induced sub-hypergraph) then $\str{C}$ may be chosen
with the same property.
\end{theorem}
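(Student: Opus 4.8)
The plan is to derive Theorem~\ref{thm:NRoriginal} as an instance of Theorem~\ref{thm:mainstrong}, by presenting the relevant class of ordered hypergraphs as a hereditary, locally finite subclass (Definition~\ref{defn:locallyfinite}) of the Ramsey class of all ordered structures and equipping it with the strong amalgamation property. First I would fix the language $L$ consisting of the binary symbol $\leq$ together with one $r$-ary symbol $E_r$ for each arity $r$ occurring among the (finitely many) hyperedges of $\str{A},\str{B},\str{F}$, and identify an ordered hypergraph with the ordered $L$-structure in which $E_r$ holds exactly on the $\leq$-increasing tuples enumerating the $r$-element edges. The point that renders the irreducibility requirement (Definition~\ref{def:irreducible}) harmless is that \emph{every} ordered $L$-structure is irreducible: since $\leq$ relates every pair of distinct vertices, no ordered structure is a nontrivial free amalgamation. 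Hence $\OStr(L)$, which is Ramsey by Theorem~\ref{thm:unrestrictedRamsey}, is a Ramsey class of irreducible $L$-structures and can play the role of $\mathcal R=\mathcal E$ in Theorem~\ref{thm:mainstrong}.

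Next I would set up the forbidden configurations; the first statement of the theorem is the case $\mathcal F=\emptyset$ of the second, so it suffices to treat a single irreducible hypergraph $\str{F}$ (a set of them is identical). As usual, $\str F$-freeness as a \emph{non-induced} sub-hypergraph is the same as membership in $\Forb(\mathcal F)$, where $\mathcal F$ is the finite family of all hypergraphs obtained from $\str F$ by adding hyperedges on the same vertex set; each such augmentation is again irreducible, since a free-amalgamation splitting of an augmentation along a vertex partition would split $\str F$ along the same partition. Let $\K_{\F}$ be the class of $\str F$-free ordered hypergraphs, so $\K_{\emptyset}$ is all ordered hypergraphs; it is hereditary. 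To see that $\K_{\F}$ is a locally finite subclass of $\OStr(L)$, I would argue as in Example~\ref{example:metric}: put $n=\max(2,|V(\str F)|)$ and, given $\str B$ satisfying conditions $(1)$--$(3)$ of Definition~\ref{defn:locallyfinite}, form its \emph{free completion} by extending the order --- including the order forced on the vertices of each hyperedge --- to a linear order and adding no new hyperedges. Conditions $(1)$ and $(3)$ make this possible, since a cycle among these forced inequalities would already occur on $\leq n$ vertices, where a $\K_{\F}$-completion exists by assumption; and they also prevent a non-induced copy of a member of $\mathcal F$ from appearing, since such a copy would occur on $\leq n$ vertices and homomorphism-embeddings (which we may take injective by Proposition~\ref{prop:strongcompletion}) preserve hyperedges. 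As no hyperedges were added, the free completion lies in $\K_{\F}$.

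Finally I would verify the strong amalgamation property of $\K_{\F}$ by hand. Given embeddings $\str{A}\hookrightarrow\str{B}_1$ and $\str{A}\hookrightarrow\str{B}_2$ in $\K_{\F}$, let $\str{C}$ have vertex set $B_1\cup_A B_2$, keep exactly the hyperedges of $\str{B}_1$ and of $\str{B}_2$, and carry the linear order obtained by interleaving the elements of $B_1\setminus A$ and $B_2\setminus A$ within the gaps of the order of $A$ (placing, say, the $B_1$-vertices before the $B_2$-vertices in each gap), which restricts correctly to both $\str{B}_i$. This amalgam is strong by construction, and it is $\str F$-free by the argument I expect to be the crux: every hyperedge of $\str{C}$ lies entirely inside $B_1$ or entirely inside $B_2$, whereas an irreducible hypergraph is connected through its hyperedges, so a non-induced copy of some $\str{F}'\in\mathcal F$ whose hyperedges were not all on one side would exhibit $\str{F}'$ as the free amalgamation of two proper subhypergraphs over their common part in $A$, contradicting irreducibility of $\str{F}'$; hence such a copy lies inside $\str{B}_1$ or $\str{B}_2$, both $\str F$-free. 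Then Theorem~\ref{thm:mainstrong}, applied with $\mathcal R=\OStr(L)$ and $\K=\K_{\F}$, produces for the given ordered hypergraphs $\str{A},\str{B}\in\K_{\F}$ an ordered hypergraph $\str{C}\in\K_{\F}$ with $\str{C}\longrightarrow(\str{B})^{\str{A}}_2$ --- and $\mathcal F=\emptyset$ gives the unrestricted statement.
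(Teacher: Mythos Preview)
Your overall plan matches what the paper indicates: Theorem~\ref{thm:NRoriginal} is cited rather than proved, but the surrounding discussion of free amalgamation classes notes that the result ``follows by a direct application of Theorems~\ref{thm:unrestrictedRamsey} and~\ref{thm:mainstrong} for $n=1$.'' Your setup of the language, the observation that every ordered $L$-structure is irreducible (so $\OStr(L)$ can serve as $\mathcal R$), and your verification of strong amalgamation for $\K_{\F}$ are all correct and are exactly the ingredients needed.

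There is one genuine gap in the local-finiteness step. You write that ``a cycle among these forced inequalities would already occur on $\leq n$ vertices,'' with $n=\max(2,|V(\str F)|)$, and appeal to conditions~(1) and~(3). That justification does not work: a $\leq$-cycle $u_1\leq u_2\leq\cdots\leq u_k\leq u_1$ in $\str B$ can have arbitrary length, and for $k\geq 4$ it is not even irreducible (it is the free amalgamation of two paths over a pair of non-adjacent vertices), so condition~(1) does not force it into a copy of $\str A$; meanwhile every substructure on at most $n$ vertices is a union of $\leq$-paths and completes to a chain, so condition~(3) sees no obstruction either. What actually rules out such cycles is condition~(2), which you never invoke: the homomorphism-embedding $f\colon\str B\to\str B_0\in\OStr(L)$ restricts to an embedding on each irreducible two-vertex edge $\{u_i,u_{i+1}\}$, forcing $f(u_1)<f(u_2)<\cdots<f(u_k)<f(u_1)$ in the linear order of $\str B_0$, a contradiction. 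Once you use~(2) here the argument goes through, and in fact $n=1$ already suffices: condition~(1) alone gives $\str F$-freeness (any non-induced copy of $\str F$ in $\str B$ spans an irreducible substructure, hence lies inside a copy of $\str A\in\K_{\F}$), and condition~(2) alone gives extendability of the order. This is precisely the simplification the paper points to.
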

In this formulation, it does not speak about an amalgamation class of structures,
but it is not hard to work out that the original partite construction proof
works in fact for all free amalgamation classes of relational structures
expanded by a free linear order. This may be seen as an interesting coincidence,
since the partite construction is not based only on a free amalgamation argument only.
However, the connection is given by the following easy observation:
\begin{observation}
For every free amalgamation class $\K$ of $\GammaL$-struc\-tures there exists a family
of irreducible structures $\mathcal{F}$ such that $\K$ is precisely the class of
all finite structures $\str{A}$ for which there is no $\str{F}\in \mathcal F$
which embeds into $\str{A}$.
\end{observation}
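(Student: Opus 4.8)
The plan is to let $\mathcal F$ be the family of all (isomorphism types of) finite irreducible $\GammaL$-structures that do \emph{not} belong to $\K$; by construction this is a family of irreducible structures, as required, and it remains to check the two inclusions. The easy direction is pure heredity: if $\str A\in\K$ then, since $\K$ is hereditary, every substructure of $\str A$ lies in $\K$, so no member of $\mathcal F$ (which by definition avoids $\K$) can embed into $\str A$; hence $\str A$ lies in the class of structures omitting $\mathcal F$.

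For the reverse inclusion I would argue by induction on the number of vertices of a finite $\GammaL$-structure $\str A$ that omits every member of $\mathcal F$, showing $\str A\in\K$. First observe that the omission hypothesis passes to substructures: if $\str F\to\str B$ and $\str B\to\str A$ are embeddings then their composite is an embedding $\str F\to\str A$, so any substructure of $\str A$ also omits all of $\mathcal F$. Now split into two cases. If $\str A$ is irreducible, then $\str A$ embeds into itself, so $\str A\notin\mathcal F$, and by the definition of $\mathcal F$ this forces $\str A\in\K$. If $\str A$ is reducible, then by Definition~\ref{def:irreducible} there are proper substructures $\str B_1,\str B_2$ of $\str A$ with $B_1\cup B_2=A$ such that $\str A$ is the free amalgamation of $\str B_1$ and $\str B_2$ over $\str D:=\str A\restriction(B_1\cap B_2)$ (with respect to the inclusion embeddings). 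Both $\str B_1$ and $\str B_2$ have strictly fewer vertices than $\str A$ and, by the previous remark, still omit all members of $\mathcal F$; by the induction hypothesis $\str B_1,\str B_2\in\K$, and then $\str D\in\K$ because $\str D$ is a substructure of $\str B_1$ and $\K$ is hereditary.

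The final step is to invoke the amalgamation structure of $\K$: since $\K$ is a \emph{free} amalgamation class, there is $\str C\in\K$ which is a free amalgamation of $\str B_1$ and $\str B_2$ over $\str D$. But the free amalgamation of two structures over a common substructure is unique up to isomorphism fixing the two factors, and $\str A$ is visibly such a free amalgamation; hence $\str A\cong\str C\in\K$, and since $\K$ is isomorphism-closed we conclude $\str A\in\K$. This closes the induction and establishes the claimed equality.

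I expect the only genuinely delicate point to be the reducible case: one must check that reducibility really does produce a decomposition into \emph{strictly smaller} proper substructures whose union is all of $A$ and over whose intersection the free amalgamation reproduces $\str A$ on the nose, so that membership of the abstract free amalgamant in $\K$ transfers back to $\str A$ via isomorphism-closure. Everything else is a routine unwinding of the definitions of the hereditary property, free amalgamation, and closure under isomorphism.
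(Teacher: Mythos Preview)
Your proof is correct. It differs from the paper's only in the choice of $\mathcal F$ and in how the core argument is packaged. The paper takes $\mathcal F$ to be the \emph{minimal} obstacles, namely those $\str F\notin\K$ all of whose proper substructures lie in $\K$, and then observes that any such $\str F$ must be irreducible: were it reducible, it would be the free amalgamation of two proper substructures, both in $\K$, forcing $\str F\in\K$ by the free amalgamation property. You instead take $\mathcal F$ to be \emph{all} irreducible non-members and run an explicit induction on $|A|$; your reducible case is exactly the contrapositive of the paper's irreducibility argument. The paper's $\mathcal F$ is contained in yours (every minimal obstacle is irreducible), and both families define the same forbidden class since any non-member contains a minimal one. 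The paper's version has the mild advantage of producing the canonical minimal family of obstacles, while your version makes the verification of both inclusions fully explicit where the paper simply asserts that $\mathcal F$ ``has the desired property''.
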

\begin{proof}
Put $\mathcal F$ to be all $\GammaL$-structures $\str{F}$ such that
$\str{F}\notin \mathcal K$ and every proper substructure of $\str F$ is in $\mathcal K$.
It follows that all such structures $\str{F}$ are irreducible (otherwise
one obtains a contradiction with $\K$ being a free amalgamation class) and
has the desired property.
\end{proof}
This can be re-formulated as follows.
Given a free amalgamation class $\K$ and a structure $\str{B}\in \K$,
we know that every structure $\str{C}$ such that every irreducible substructure
of $\str{C}$ is also a substructure of $\str{B}$ is in the class $\K$.
This is precisely what the original formulation of the Ne\v set\v ril--R\"odl theorem
gives. In the EPPA context, a stronger condition is accomplished by irreducible
structure faithfulness.

It is thus well established that all free amalgamation classes of relational
structures are Ramsey when expanded with a free linear order and this expansion has the expansion (ordering) property (see \eg~Ne\v set\v ril's~\cite{Nevsetvril1995} or Nguyen Van Th\'e's~\cite{NVT14} surveys). All free amalgamation
classes of relational structures also have (coherent) EPPA by an analogous result of Hodkinson and Otto~\cite{hodkinson2003},
as observed by Siniora~\cite{Siniora2} (with coherence verified by Siniora and Solecki~\cite{Siniora}).

Note that the class $\mathcal F$ can also be seen as a class of minimal \emph{obstacles}.
 The
analysis of obstacles in a generalised sense of
Definition~\ref{defn:locallyfinite} remains the main tool for obtaining
Ramsey and EPPA results.

\medskip

The situation is more complicated for languages involving functions.
Forty years after the Ne\v set\v ril--R\"odl theorem,
Evans, Hubi\v cka and Ne\v set\v ril~\cite[Theorem 1.3]{Evans3} showed that all free amalgamation classes
are Ramsey when expanded by a free linear order. However, the expansion property does not
necessarily hold~\cite[Proposition 3.1]{Evans3}. The special (admissible) orderings
which lead to a Ramsey class with the expansion property are explicitly described
in Section~3.1 of~\cite{Evans3}. 

A coherent EPPA theorem for free amalgamation classes with unary functions was also shown by Evans, Hubi\v cka and Ne\v set\v ril~\cite[Theorem 1.3]{Evans3}.
A generalisation for languages with non-unary functions remains open.

\medskip

Ramsey expansions of free amalgamation classes also follow by a direct application of Theorems~\ref{thm:unrestrictedRamsey} and~\ref{thm:mainstrong} for $n=1$.
Similarly, a generalisation of~\cite[Theorem 1.3]{Evans3} for $\GammaL$-structures in languages with unary functions follows by a direct application of Theorems~\ref{thm:unrestrictedEPPA} and~\ref{thm:mainstrongEPPA} for $n=1$.
A special case of an EPPA construction for free amalgamation class with non-unary functions is discussed in~\cite[Section 9.3]{Hubicka2018EPPA}. It makes a
non-trivial use of the permutation group on the language, where the result of one EPPA construction defines the language and its permutation group for another EPPA construction. In general, however, the question of EPPA for free amalgamation classes in languages involving non-unary functions remains open.

\subsection{Partial orders}
\label{sec:partialorders}
The class of all partial orders expanded by a linear extension forms a Ramsey
class.  This result was announced by Ne\v set\v ril and R\"odl in
1984~\cite{Nevsetvril1984} and the first proof was published by Paoli, Trotter and
Walker one year later~\cite{Trotter1985}. The original proof of Ne\v set\v ril and R\"odl using the partite construction
was published only recently~\cite{nevsetvril2018ramsey}. Several alternative proofs
are known~\cite{sokic2012ramsey,masulovic2016pre}.

This is the first example where an interaction between the partial order in the class
and a linear order of its expansion was observed (a phenomenon that generalises
to other examples of reducts of partial~\cite{pach2014reducts} and total orders~\cite{junker2008116}).
  It can be easily shown
that partial orders do not form a Ramsey class when expanded by a free linear
order~\cite[Lemma 4]{sokic2012ramsey}.

The Ramsey property for partial orders with linear extensions can be shown by
an easy application of Theorem~\ref{thm:mainstrong}~\cite[Section 4.2.1]{Hubicka2016}.
In fact, this class served as a key motivation
for the notion of locally finite subclass. As discussed in~\cite[Section 4.2.1]{Hubicka2016},
the choice of parameter $n$ in Definition~\ref{defn:locallyfinite} depends on
the number of vertices of $\str{B}_0$.  It most other applications, the bound $n$
can be chosen globally for all possible choices of $\str{B}_0$. This
example demonstrates the power of the ``ambient linear order'' present in every 
Ramsey class.  

Observe that the class of all partial order does not have EPPA for the same
reason as in the case of total orders. This example thus shows the importance
of the ``base class'' $\mathcal E$ in the definition of locally finite subclass.

\subsection{Metric spaces}
\label{sec:metric}
The Ramsey property of linearly ordered metric spaces was shown by Ne\v set\v ril~\cite{Nevsetvril2007}.  He isolated the general form of the iterated
partite construction which is also the basic mechanism of the proof of
Theorem~\ref{thm:sparseningRamsey} (special cases of this technique have been used
since late 1970's~\cite{Nevsetvril1979}, but here it appears in a very general form). 
A related construction was also independently used by Dellamonica and R{\"o}dl for
the class of graphs with distance preserving embeddings~\cite{Dellamonica2012}.

The iterated partite construction has proven to be a useful tool for obtaining many additional
results. However, quite surprisingly, a rather simple reduction to the (believed to be
much easier) Ramsey property of the class of finite partial orders with a linear
extension has recently been published by Ma{\v{s}}ulovi{\'c}~\cite{masulovic2016pre}.  This
construction does not however seem to extend to other cases on which the
iterated partite construction method can be applied.

\medskip

Metric spaces presented an important example in the study of EPPA classes, too.
EPPA for metric spaces was obtained independently by Solecki~\cite{solecki2005}
and Vershik~\cite{vershik2008}. Solecki's proof is among the first applications
of the deep Herwig--Lascar theorem~\cite{herwig2000}, while Vershik announced a direct
proof which remains unpublished. Additional proofs were published by Pestov~\cite{Pestov2008}, Rosendal~\cite{rosendal2011}, Sabok~\cite[Theorem 8.3]{sabok2017automatic}. All these proofs
are based on group-theoretical methods (the M.~Hall theorem~\cite{hall1949}, the Herwig--Lascar
theorem~\cite{herwig2000,otto2017,Siniora}, the {R}ibes--{Z}alesski{\u\i} theorem \cite{Ribes1993} or Mackey's
construction~\cite{mackey1966}). A simple combinatorial proof was found by Hubi\v cka, Kone{\v c}n{\'y and Ne{\v{s}}et{\v{r}}il in 2018~\cite{Hubicka2018metricEPPA},
the core ideas of which were later developed to the form of Theorem~\ref{thm:sparseningEPPA}.

\medskip
Metric spaces with additional axioms are still presenting interesting challenges in
the classification programme.
Several special classes of metric spaces are considered by Nguyen Van Th{\'e} in his
monograph~\cite{The2010}. A generalisation of metric spaces was considered by Conant~\cite{Conant2015}.

Analysis of local finiteness based on non-metric cycles is outlined in Example~\ref{example:metric} and can
be extended to various restricted cases of generalised and restricted metric spaces. This presents a currently active line of research~\cite[Section 4.2.2]{Hubicka2016}, \cite{Aranda2017c,Hubickabigramsey,Hubicka2017sauerconnant,Hubicka2017sauer,Konecny2018bc,Konecny2018b}.
One of the important open questions in the area is the existence of a precompact Ramsey expansion (or EPPA)
of the class of all finite affinely independent Euclidean metric spaces~\cite{The2010}.

\subsection{Homogenisations of classes defined by forbidden homomorphic images}
Fix a family $\F$ of finite connected $L$-structures and consider the class
$\K_\mathcal F$ of all countable structures $\str{A}$ such that there is no $\str{F}\in
\F$ with a monomorphism to $\str{A}$. (For graphs this means that $\str{F}$ is isomorphic no subgraph $\str{S}$ of $\str{A}$. Note that here $\str{S}$ is not necessarily induced.)
In 1999, Cherlin, Shelah and Shi~\cite{Cherlin1999} gave a structural condition for the existence of
a \emph{universal} structure $\str{U}\in K_\mathcal F$, that is a structure which contains a copy
of every other structure in $\K_\mathcal F$.

Deciding about existence of a universal structure for a given class $\K_\mathcal F$
is a non-trivial task even for $\F$ consisting of a single graph~\cite{Cherlin2001,Cherlin2007,Cherlin2007a,Cherlin2007b,Cherlin2011,Cherlin2015,Cherlinb}.
However when $\F$ is finite and closed for homomorphism-embedding images then a universal structure always exists.
Hubi\v cka and Ne\v set\v ril \cite{Hubicka2009,Hubicka2013}
 studied explicit constructions of 
universal structures for such classes $\K_\F$.
(In earlier works, homomorphisms are used in place of homomorphism-em\-bedd\-ings.)
This led to an explicit construction which expands
class $\Forb(\F)$ to an amalgamation class by means of new relations (called a \emph{homogenization}
and first used in this context by
Covington~\cite{Covington1990}) and 
the universal structure is then the reduct of the \Fraisse{} limit of $\Forb(\F)$.

Work on a general theorem giving Ramsey expansions for $\Forb(\F)$ classes
resulted in an unexpected generalisation  in the form of Theorem~\ref{thm:mainstrong}.
For that Proposition~\ref{prop:strongcompletion} giving close interaction between the strong amalgamation and $\Forb(\F)$ classes was necessary.
The
relationship to homogenisations is exploited in~\cite[Section 3]{Hubicka2016} 
and an explicit description of the expansion is given. 
These results were recently applied on infinite-domain constraint satisfaction
problems by Bodirsky, Madelaine and Mottet~\cite{bodirsky2018universal}.

Curiously, a similar general result in the EPPA context was formulated a lot
earlier in the form of the Herwig--Lascar theorem~\cite{herwig2000}. 
One of the contributions of Theorem~\ref{thm:mainstrongEPPA}
is thus giving a generalisation of the
Herwig--Lascar theorem with a new, and more systematic, proof. The Herwig--Lascar theorem
has been generally regarded as the deepest result
in the area. See also~\cite{bitterlich2019investigations}.

\subsection{Classes with algebraic closures}
\label{sec:closures}
Amalgamation classes which are not strong amalgamation classes lead to \Fraisse{} limits with
non-trivial algebraic closure:
\begin{definition}
\label{def:algebraic}
Let $L$ be a relational language, let $\str{A}$ be an $L$-structure and let $S$ be a finite subset of $A$.  The
\emph{algebraic closure of $S$ in $\str{A}$}, denoted by $\ACl_\str{A}(S)$, is the
set of all vertices $v\in A$ for which there is a formula $\phi$ in the language $L$ with $\vert S\vert +1$
variables such that $\phi(\vv{S},v)$ is true and there are only finitely many
vertices $v'\in A$ such that $\phi(\vv{S},v')$ is also true. (Here, $\vv{S}$
is a fixed enumeration of $S$.)
\end{definition}
To our best knowledge, the first class with non-trivial algebraic closure studied
in the direct structural Ramsey context was the class of all finite bowtie-free
graphs~\cite{Hubivcka2014} (inspired by the aforementioned work of Cherlin,
Shelah and Shi~\cite{Cherlin1999} and an earlier result of Komj\'ath, Mekler and
Pach~\cite{Komjath1988}).

While this class was originally thought to be a good candidate for a class with no
precompact Ramsey expansion, this turned out to not be the case.  In the
direction of finding a proof of the Ramsey property for this class, the partite construction was
extended for languages involving functions, which can be used to connect a set
to its closure.  This technique turned out to be very general and is
discussed in detail in~\cite[Section~4.3]{Hubicka2016}.

EPPA for the class of all finite bowtie-free graphs was considered by
Siniora \cite{siniora2017bowtie,Siniora2} who gave a partial result for
extending one partial automorphisms.  Thanks to the general results on free
amalgamation classes with unary functions the proof of the existence of a Ramsey expansion and
of EPPA for the class of all finite bowtie-free graphs is now
easy~\cite[Setion 5.4]{Evans3} and in fact generalises to all known Cherlin--Shelah--Shi classes with one constraint~\cite[Section 4.4.2]{Hubicka2016}.

\medskip
Introducing expansions with function symbols has turned out to be a useful tool
for many additional examples which we outline next.

\subsection{Classes with definable equivalences}
\label{sec:equivalences}
Example~\ref{example:13b} showing a subclass which is not a locally finite subclass
can be generalised using the model-theoretical notion of definable equivalences.

Let $\str{A}$ be an $L$-structure.  An \emph{equivalence formula} is a
first order formula $\phi(\vv{x},\vv{y})$ which is symmetric and transitive on the set of
all $n$-tuples $\vv{a}$ of vertices of $\str{A}$ where
$\phi(\vv{a},\vv{a})$ holds (the set of such $n$-tuples is called the \emph{domain} of
the equivalence formula $\phi$).

It is not hard to observe that definable equivalences with finitely many
equivalence classes may be obstacles to being Ramsey.  For a given ordered
structure $\str{U}$, we say that $\phi$ is an \emph{equivalence formula on
copies of $\str{A}$} if $\phi$ is an equivalence formula,
$\phi(\vv{a},\vv{a})$ holds if and only if the structure induced by
$\str{U}$ on $\vv{a}$ is isomorphic to $\str{A}$ (in some fixed order of vertices of $\str{A}$).

\begin{prop}[{\cite[Proposition 4.25]{Hubicka2016}}]
\label{prop:imaginary}Let $\mathcal K$ be a hereditary Ramsey class of ordered $L$-structures, $\str{U}$ its \Fraisse{} limit,
$\str{A}$ a finite substructure of $\str{U}$ and $\phi$ an equivalence formula on copies of $\str{A}$.
Then $\phi$ has either one or infinitely many equivalence classes.
\end{prop}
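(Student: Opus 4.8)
The plan is to argue by contradiction from the Ramsey property of $\mathcal K$. Suppose $\phi$ has $k$ equivalence classes with $2\le k<\infty$, and let $\vv{A}$ be the enumeration of $A$ in the order fixed in the definition of ``equivalence formula on copies of $\str A$''. Pick tuples $\vv{a}_1,\dots,\vv{a}_k$ in $\str U$, one from each class of $\phi$; each $\vv{a}_i$ enumerates an induced substructure of $\str U$ isomorphic to $\str A$. Let $\str B_0$ be the substructure of $\str U$ induced on $\bigcup_{i=1}^{k}\vv{a}_i$; it is finite, so $\str B_0\in\Age(\str U)=\mathcal K$, and let $h_i\colon\str A\to\str B_0$ be the embedding with $h_i(\vv{A})=\vv{a}_i$.

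The one substantial step is a transfer statement: \emph{every} embedding $g\colon\str B_0\to\str U$ realises all $k$ classes of $\phi$ among its copies of $\str A$, meaning that $g\circ h_1,\dots,g\circ h_k\colon\str A\to\str U$ lie in $k$ pairwise distinct $\phi$-classes. To see this I would invoke (ultra)homogeneity of $\str U$: the map $g$ is an isomorphism between the finite substructures $\str B_0$ and $g(\str B_0)$ of $\str U$, so it extends to an automorphism $\alpha\in\Aut(\str U)$ with $\alpha\restriction B_0=g$. Since $\alpha$ preserves the first-order formula $\phi$ and sends copies of $\str A$ to copies of $\str A$, it permutes the $k$ classes of $\phi$; as $\vv{a}_i$ and $\vv{a}_j$ lie in different classes for $i\ne j$, so do $g(\vv{a}_i)=\alpha(\vv{a}_i)$ and $g(\vv{a}_j)=\alpha(\vv{a}_j)$, which is exactly the claim.

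Granting this, the contradiction is immediate. By the Ramsey property choose $\str C\in\mathcal K$ with $\str C\longrightarrow(\str B_0)^{\str A}_k$, and fix an embedding $e\colon\str C\to\str U$, which exists because $\mathcal K=\Age(\str U)$. Colour each $f\in\binom{\str C}{\str A}$ by the index of the $\phi$-class of the tuple $e(f(\vv{A}))$ (which is a copy of $\str A$ in $\str U$, hence in the domain of $\phi$); this partitions $\binom{\str C}{\str A}$ into at most $k$ classes. Hence there is $\widetilde{\str B}\in\binom{\str C}{\str B_0}$, with underlying embedding $g_0\colon\str B_0\to\str C$ (so $\widetilde{\str B}=g_0(\str B_0)$), such that $\binom{\widetilde{\str B}}{\str A}$ is monochromatic. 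But $e\circ g_0\colon\str B_0\to\str U$ is an embedding, so by the transfer statement the $k$ embeddings $g_0\circ h_1,\dots,g_0\circ h_k$ of $\str A$ into $\widetilde{\str B}$ receive $k\ge 2$ distinct colours, contradicting monochromaticity. Thus $2\le k<\infty$ is impossible, so $\phi$ has one or infinitely many classes.

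The genuinely routine parts are the bookkeeping that the relevant composites are embeddings and that $\str B_0\in\mathcal K$. The point that needs care --- and which I expect to be the crux --- is that the colouring is defined through one fixed copy of $\str C$ inside $\str U$, so a Ramsey-monochromatic copy $\widetilde{\str B}$ of $\str B_0$ lands in $\str U$ via the embedding $e\circ g_0$, which in general is not the inclusion of $\str B_0$; the transfer statement of the second paragraph (\ie{} homogeneity) is precisely what guarantees that this arbitrary copy still witnesses all $k$ colours. Only the case $2\le k<\infty$ has to be ruled out; the statement does permit $\phi$ to have infinitely many classes.
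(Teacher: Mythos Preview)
Your argument is correct and follows essentially the same route as the paper's proof: assume $2\le k<\infty$, find a finite $\str B\subseteq\str U$ containing copies of $\str A$ in distinct $\phi$-classes, and observe that no embedded copy of $\str B$ in $\str U$ can be monochromatic for the $\phi$-colouring, contradicting Ramsey. The only cosmetic differences are that the paper takes just two representatives rather than all $k$, and that where you prove the ``transfer statement'' by extending an embedding to an automorphism via homogeneity, the paper instead invokes the equivalent fact that $\phi$ may be taken quantifier-free (so it is preserved by embeddings automatically); these are two standard ways of saying the same thing.
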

\begin{proof}
Assume to the contrary that $\phi$ is an equivalence formula on copies of $\str{A}$ which defines $k$ equivalence classes, $1<k<\infty$.
 It is well known that from ultrahomogeneity we can assume that $\phi$ is quantifier-free.
Consequently, there is a finite substructure $\str{B}\subseteq \str U$ containing two copies of $\str{A}$ which belong to two different equivalence classes of $\phi$.
Partition $\str U\choose \str A$ to $k$ equivalence classes of $\phi$. Since $\phi$ is quantifier free, we get that there is no $\widetilde{\str B}\in {\str U \choose \str B}$ such that $\widetilde{\str B}\choose \str A$ would lie in a single equivalence class. Clearly, this implies that there is no $\str C\in \mathcal K$ such that $\str{C}\longrightarrow (\str{B})^\str{A}_k$, hence contradicting the Ramsey property.
\end{proof}
In model theory,
 an \emph{imaginary} element $\vv{a}/\phi$ of
$\str{A}$ is an equivalence formula $\phi$ together with a representative
$\vv{a}$ of some equivalence class of $\phi$.
Structure $\str{A}$ \emph{eliminates  imaginary $\vv{a}/\phi$} if there
exists a first order formula $\Phi(\vv{x},\vv{y})$ such that there is a unique tuple $\vv{b}$
such that $\phi(\vv{x}, \vv{a}) \iff \Phi(\vv{x}, \vv{b})$.

The notion of elimination of imaginaries separates those definable equivalence
which are not a problem for Ramsey and EPPA constructions (for example, graphs
have a definable equivalence on pairs of vertices with two equivalence classes:
edges and non-edges) and those which are problem (such as balls of diameter
one in Example~\ref{example:metric}). 

To give a Ramsey expansion, we thus first want to expand the structure and achieve
elimination of (relevant) imaginaries.
For a given equivalence formula $\phi$ with finitely many equivalence classes,
it is possible to expand the language by explicitly adding relations representing the
individual equivalence classes. For an equivalence formula $\phi$ with infinitely
many equivalence classes, the elimination of imagines can be accomplished by
adding new vertices representing the equivalence classes and a function symbol
linking the elements of individual equivalence classes to their representative.

This technique was first used to obtain Ramsey expansion of $S$-metric
spaces~\cite[Section 4.3.2]{Hubicka2016}.  Braunfeld used this technique to
construct an interesting Ramsey expansion of $\Lambda$-ultrametric spaces~\cite{Sam} (here the Ramsey expansion does not consist of a single
linear order of vertices but by multiple partial orders), which was later generalised in Kone\v cn\' y's master thesis~\cite{Konecny2018b}.

\medskip

Analogous problems need to be solved for the EPPA constructions as well.  In the case
of equivalences on vertices, this can be accomplished by unary
functions in the same was as for Ramsey expansion. A technique to work with equivalence
on $n$-tuples with infinitely many equivalence classes was introduced by
Ivanov~\cite{Ivanov2015} who observed that a technical lemma on
``permomorphisms''~\cite[Lemma~1]{herwig1998} can be used to obtain EPPA here.
Hubi\v cka, Kone\v cn\'y and Ne\v set\v ril~\cite{Hubicka2018EPPA} generalised
this technique to the notion of $\GammaL$-structures
which in turn proved to be useful for more involved constructions.

EPPA for the class of all structures with one quaternary relation defining equivalences
on subsets of size two with two equivalence classes is an open problem \cite{eppatwographs}.

\subsection{Classes with non-trivial expansions}
Ramsey expansions are not always obtained by ordering vertices and
imaginaries alone.  An early example (generic local order) is already discussed in Example~\ref{example:S2}.
Another example of this phenomenon is the class of two-graphs~\cite{Seidel1973}.
\emph{Two-graph $\str{H}$, corresponding to a graph $\str{G}$} is a 3-uniform hypergraph created from $\str G$ by putting
a hyper-edge on every tripe containing an odd number of edges. 

While there are non-isomorphic graphs such that their two-graphs are isomorphic,
it is not hard to show that every Ramsey expansion of two-graphs must fix one
particular graph~\cite[Section 7]{eppatwographs}. This means that the class of
all finite two-graphs (that is, finite hypergraphs which are two-graphs corresponding to some
finite graph $\str{G}$) is not a locally finite subclass of the class of all
finite hypergraphs.  Yet, quite surprisingly, this class has EPPA as was
recently shown by Evans, Hubi\v cka, Ne\v set\v ril and Kone\v cn\'y~\cite{eppatwographs}.
As discussed in Section~\ref{sec:classification} presents interesting example with
respect to question \ref{Q4} (where the answer is open but conjectured to be negative) and \ref{Q5}
(where answer is negative).

We believe that this class is the first known example of a class with EPPA but not ample generics~\cite[Section 8]{eppatwographs}.
It is also presently open if this class has coherent EPPA.

\medskip

Yet another related example is the class of all semigeneric
tournaments. A Ramsey expansion was given by Jasi{\'n}ski, Laflamme, Nguyen
Van Th{\'e} and Woodrow \cite{Jasinski2013}.  EPPA was recently claimed by
Hubi\v cka, Jahel, Kone\v cn\'y and Sabok \cite{HubickaSemigenericAMUC}.

\medskip

These examples share the property that there is no auto\-morphism-preserving
completion property, yet EPPA follows by some form of the valuation construction.
This suggests that Theorem~\ref{thm:mainstrongEPPA} can be strengthened. It
is however not clear what the proper formulation should be.

Most of these examples can be seen as reducts of other homogeneous structures
and the valuation constructions are related to this process.
There is an ongoing classification programme of such reducts which makes
use of knowing the corresponding Ramsey classes (see \eg{}~\cite{Thomas1991,thomas1996reducts,junker2008116,bodirsky2010reducts,Bodirsky2011,bodirsky201542,pach2014reducts}).
This may close a full circle: the study of Ramsey properties leads to a better understanding
of reducts which, in turn, may lead to a better understanding of EPPA.
Some open problems in this direction are discussed in~\cite{eppatwographs}.

\medskip

Perhaps the most surprising example was however identified by Evans and is
detailed in Section~\ref{sec:negative}.

\subsection{Limitations of general methods}
Current general theorems seems to provide a very robust and systematic
framework for obtaining Ramsey expansions and EPPA. We refer the reader to~\cite[Section 4]{Hubicka2016}
which provides numerous additional positive examples.
However, there are known
examples which do not follow by these techniques:

\begin{enumerate}
\item The class of all finite groups is known to have EPPA~\cite{Siniora,song2017hall}.
It is not clear what the corresponding Ramsey results should be.
Moreover, EPPA does not seem to follow by application of Theorem~\ref{thm:mainstrongEPPA}.
\item The class of all skew-symmetric bilinear forms has EPPA~\cite{Evans2005}.
Again it is now known if there is a precompact relational Ramsey expansion and Theorem~\ref{thm:mainstrongEPPA} does not seem to apply.
\item The Graham--Rothschild theorem implies that the class of finite boolean algebras is Ramsey.
In general, dual Ramsey theorems seems to not be covered by the presented framework, including
structural dual Ramsey theorems by Pr{\"o}mel~\cite{promel1985induced}, Frankl, Graham and R\"odl~\cite{frankl1987} and Solecki \cite{Solecki2010}.
These results all share similar nature but differ in the underlying categories.
It seems to the author that proper foundations for the structural Ramsey theory of dual structures need to be developed
building on non-structural results~\cite{Leeb,Graham1972,spencer1979ramsey}, approaches based on category theory~\cite[Section 2.3]{promel1985induced}
and recent progress on the projective \Fraisse{} limits~\cite{irwin2006,panagiotopoulos2016compact}.
It is also an interesting question, how to relate this to the self-dual Ramsey theorem
of Solecki~\cite{Solecki2013,Solecki2014}.

This line of research seems promising.
Barto\v sov\'a and Kwiatkowska applied the KPT-correspondence on the Lelek fans~\cite{bartovsova2014,bartovsova2017universal}.
A dualisation of Theorem~\ref{thm:mainstrong} is currently work in progress.
Dualisation of EPPA is also a possible future line of research.

\item Melleray and Tsankov adapted the KPT-correspondence to the context of metric structures~\cite{melleray2014extremely}.
First successful application of Ramsey theory in this direction was done by Barto\v sov\'a, Lopez-Abad, Lupini and Mbombo~\cite{bartosova2017ramsey}.
\item Soki{\'c} has shown Ramsey expansions of the class of lattices~\cite{sokic2015semilattices}. Again it is not clear
how to obtain this result by application of Theorem~\ref{thm:mainstrong}.
\end{enumerate}

\section{Negative results}
\label{sec:negative}
We conclude this introduction by a brief review of the surprising negative result of \cite{Evans2}.

Given the progress of the classification programme, one may ask if there are negative answers to
question~\ref{Q1} (Section~\ref{sec:classification}). 
More specifically, is there a homogeneous structure $\str{H}$ such that
there is no Ramsey structure $\str{H}^+$ which is a precompact relational expansion of $\str{H}$?

Eliminating easy counter-examples (such as $\mathbb Z$ seen as a metric space~\cite{NVT14}), a more specific question was raised by
Melleray, Nguyen Van Th{\'e} and Tsankov \cite[Question 1.1]{Melleray2015}, asking if the answer to question~\ref{Q1}
is positive for every $\omega$-categorical homogeneous structure $\str{H}$ (that is, it's automorphism group has only finitely many orbits on $n$-tuples for every $n$).
In a more restricted form, Bodirsky, Pinsker and Tsankov~\cite{Bodirsky2011a} asked if the answer to question~$\ref{Q1}$ is positive for every structure $\str{H}$
homogeneous in a finite relational language.

A negative answer to the question asked by Melleray, Nguyen Van Th{\'e} and Tsankov was given by Evans~\cite{Evans} (the latter question remains open). This led to the following:
\begin{theorem}[Evans, Hubi\v cka, Ne\v set\v ril~\cite{Evans2}] \label{cex} There exists a countable, $\omega$-categorical $L$-structure $\str{H}$ with the property that if $\Gamma \leq \Aut(\str{H})$ is extremely amenable (in other words, $\Gamma$ is an automorphism group of a Ramsey structure which is an expansion of $\str{H}$), then $\Gamma$ has infinitely many orbits on $\str{H}^2$. In particular, there is no precompact expansion of $\str{H}$ whose automorphism group is extremely amenable.
\end{theorem}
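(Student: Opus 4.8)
The plan is to realise $\str H$ as a carefully engineered generic structure of Hrushovski type and then to split the argument into a soft topological-dynamics step and one combinatorial lemma, with essentially all of the difficulty in the construction and the lemma. For the topological step, suppose $\Gamma\le\Aut(\str H)$ is extremely amenable. Replacing $\Gamma$ by its closure $\overline{\Gamma}$ in $\Aut(\str H)$ changes neither its orbits on finite tuples (in particular on $H^2$) nor its extreme amenability --- if $\Gamma$ fixes a point of a continuous action on a compact Hausdorff space, the stabiliser of that point is closed and hence contains $\overline{\Gamma}$ --- so we may assume $\Gamma$ is closed. The space $\mathrm{LO}(H)$ of all linear orders on $H$, with its product topology and the natural $\Gamma$-action, is a compact $\Gamma$-flow, so extreme amenability provides a $\Gamma$-fixed linear order $\prec$; that is, $\Gamma\le\Aut(\str H,{\prec})$. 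Since a subgroup has at least as many orbits as the ambient group, it suffices to prove the following lemma: \emph{for every linear order $\prec$ on $H$, the group $\Aut(\str H,{\prec})$ has infinitely many orbits on $H^2$.} The ``in particular'' clause then follows, because a precompact relational expansion of the $\omega$-categorical structure $\str H$ is itself $\omega$-categorical and so has only finitely many orbits on $H^2$; hence its automorphism group cannot be extremely amenable.

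It remains to construct $\str H$ and prove the lemma, and this is the heart of the matter. We take $\str H$ to be the generic model of an amalgamation class of finite ``sparse'' structures governed by a predimension function $\delta$, in the spirit of \cite{hrushovski1992} and of Evans's example \cite{Evans}, and we apply the standard collapse (via a $\mu$-function) so that $\str H$ is $\omega$-categorical. The class is designed so that $\str H$ contains, inside larger and larger finite \emph{self-sufficient} substructures, configurations of unbounded combinatorial depth: for each $N$ there is a self-sufficient $\str D_N\le\str H$ together with a distinguished family of pairs that all realise the \emph{same} $2$-type in $\str H$ --- so this depth is invisible to $\Aut(\str H)$, which is consistent with $\omega$-categoricity since each individual algebraic closure $\ACl_{\str H}(\{a,b\})$ stays finite --- while at the same time every automorphism of $\str H$ respects enough of the internal symmetry of $\str D_N$ that any linear order, restricted to $\str D_N$, is forced to break that symmetry at $N$ different scales and hence to split the distinguished pairs into $N$ distinct orbits of $\Aut(\str H,{\prec})$. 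Letting $N\to\infty$ yields the lemma. The delicate point --- and the step I expect to absorb essentially all of the work --- is arranging the interaction between the self-sufficient closure operator and linear orders so that orderings are compelled to expose nested symmetries at unboundedly many scales, while the $\mu$-collapse simultaneously keeps every finite closure, and thus the whole $2$-type structure of $\str H$, finite. This is precisely the balancing act carried out in \cite{Evans2}.
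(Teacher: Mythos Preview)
The paper is a survey and does not itself prove this theorem; it states the result, attributes it to \cite{Evans2}, and only adds that ``this is demonstrated on a specific structure $\str H$ constructed using the Hrushovski predimension construction.'' So there is no in-paper proof to compare against line by line.

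That said, your outline matches the architecture of the argument in \cite{Evans2}. The topological reduction is correct and is exactly the standard opening: passing to the closure preserves both orbits on finite tuples and extreme amenability (your stabiliser argument is fine), the $\mathrm{LO}(H)$ flow produces a $\Gamma$-invariant linear order $\prec$, and one is reduced to showing that $\Aut(\str H,\prec)$ has infinitely many orbits on $H^2$ for \emph{every} linear order $\prec$. Your derivation of the ``in particular'' clause from $\omega$-categoricity of precompact expansions is also correct.

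Where your proposal stops being a proof is precisely where you say it does: the combinatorial lemma. Your description --- self-sufficient $\str D_N$ containing pairs of a single $2$-type that any linear order must split into $N$ orbits --- is a plausible narrative, but as written it is a promise rather than an argument. Two cautions. First, the phrase ``every automorphism of $\str H$ respects enough of the internal symmetry of $\str D_N$'' is doing a lot of unexplained work; in the actual argument the leverage comes from the interaction of the self-sufficient closure operator with the way the predimension constrains embeddings, not from automorphisms of $\str D_N$ per se. Second, you need the $\mu$-collapse to produce $\omega$-categoricity while \emph{still} leaving room for the unbounded configurations you want; these two requirements pull in opposite directions, and showing they are compatible is genuinely the content of the construction in \cite{Evans2}, not something one can wave at. Until that tension is resolved explicitly, the proposal is a correct plan but not yet a proof.
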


\medskip

This is demonstrated on a specific structure $\str{H}$ constructed using the Hrushovski predimension construction.
The automorphism group of $\str{H}$ has many interesting properties~(see \cite{Evans2}). We single out the fact that,
unlike the case of integers (where the only possible Ramsey expansion is the trivial one naming every vertex),
there exists a non-trivial Ramsey (non-precompact) expansion $\str{H}^+$ of~$\str{H}$.
In this context, however, the expansion property can not be used to settle that $\str{H}^+$
is ``minimal'' in a some sense. Answering a question asked by Tsankov in Banff meeting in 2015,  \cite{Evans2} shows
that $\Aut(\str{H}^+)$ is maximal among extremely amenable subgroups of $\Aut(\str{H})$.
This justifies the minimality of this Ramsey expansion.  The question about uniqueness remains open.

A similar situation arises with EPPA. The age of $\str{H}$ does not have EPPA~\cite[Corollary 3.9]{Evans2} (thus
the answer to \ref{Q3} is negative).  However, a
meaningful EPPA expansion is proposed in the form of an amenable
subgroup~\cite[Theorem 6.11]{Evans2}. This subgroup is again conjectured to be
maximal among the amenable subgroups of
$\Aut(\str{H})$~\cite[Conjecture 7.5]{Evans2}.  Hubi\v cka, Kone\v cn\'y and
Ne\v set\v ril recently proved EPPA for this class~\cite[Theorem
9.8]{Hubicka2018metricEPPA}. The second part of \cite[Conjecture 7.5]{Evans2} (about maximality)
remains open.

These examples are not isolated and several variants of this construction
are considered~\cite{Evans2}. 
This shows that the interplay between EPPA,
Ramsey classes and amalgamation classes is more subtle than previously believed, especially in the
context of structures in languages with functions.
Because most of structural Ramsey theory was developed in the context
of relational structures, these situations have only been encountered recently.
These results also demonstrate that it is meaningful (and very interesting) to extend the classification programme even for structures with such
behaviour.

\section*{Acknowledgement}
The author is grateful to David Evans, Gregory Cherlin and Mat\v ej Kone\v cn\'y for number of remarks
and suggestions which improved presentation of this paper.
\bibliographystyle{alpha}
\bibliography{./bibliography.bib}
\end{document}